\date{}
\newtheorem{theorem}{Theorem}
\newtheorem{lemma}{Lemma}
\newcommand{\Var}{\mathrm{Var}}
\newcommand{\E}{\mathrm{E}}
\title{\textbf{Bayesian estimation in differential equation models}}
\author{Prithwish Bhaumik and Subhashis Ghosal\vspace{.1in}\\\it{North Carolina State University}}
\begin{document}
\maketitle
\begin{abstract}
Ordinary differential equations (ODEs) are used to model dynamic systems appearing in engineering, physics, biomedical sciences and many other fields. These equations contain unknown parameters, say $\bm\theta$ of physical significance which have to be estimated from the noisy data. Often there is no closed form analytic solution of the equations and hence we cannot use the usual non-linear least squares technique to estimate the unknown parameters. There is a two step approach to solve this problem, where the first step involves fitting the data nonparametrically. In the second step the parameter is estimated by minimizing the distance between the nonparametrically estimated derivative and the derivative suggested by the system of ODEs. The statistical aspects of this approach have been studied under the frequentist framework. We consider this two step estimation under the Bayesian framework. The response variable is allowed to be multidimensional and the true mean function of it is not assumed to be in the model. We induce a prior on the regression function using a random series based on the B-spline basis functions. We establish the Bernstein-von Mises theorem for the posterior distribution of the parameter of interest. Interestingly, even though the posterior distribution of the regression function based on splines converges at a rate slower than $n^{-1/2}$, the parameter vector $\bm\theta$ is nevertheless estimated at $n^{-1/2}$ rate.
\end{abstract}

\section{Introduction}

Suppose that we have a regression model $\bm Y=\bm f_{\bm\theta}(t)+\bm\varepsilon,\,\bm\theta\in\Theta\subseteq\mathbb{R}^p$. The explicit form of $\bm f_{\bm\theta}(\cdot)$ may not be known, but the function is assumed to satisfy the system of ordinary differential equations (ODEs) given by
\begin{eqnarray}
\frac{d\bm f_{\bm\theta}(t)}{dt}=\bm F(t,\bm f_{\bm\theta}(t),\bm\theta),\,t\in [0,1];\label{intro}
\end{eqnarray}
here $\bm F$ is a known appropriately smooth vector valued function and $\bm\theta$ is a parameter vector controlling the regression function. Equations of this type are encountered in various branches of science such as in genetics \citep{chen1999modeling}, viral dynamics of infectious diseases [\citet{anderson1992infectious}, \citet{nowak2000virus}]. There are numerous applications in the fields of pharmacokinetics and pharmacodynamics (PKPD) as well. For the two-compartment models a conservation law is described by the differential equations
\begin{eqnarray}
V_c\frac{dC_p(t)}{dt}&=&-Cl_d(C_p(t)-C_t(t))-ClC_p(t),\nonumber\\
V_t\frac{dC_t(t)}{dt}&=&Cl_d(C_p(t)-C_t(t)),\nonumber
\end{eqnarray}
where $C_p(t)$ and $C_t(t)$ denote the concentrations of drug in the central and peripheral compartments respectively, $V_c$ and $V_t$ stand for the volumes of the two compartments, and $Cl$ and $Cl_d$ are the clearance rates. The above system can be solved analytically, but there are a lot of instances where no closed form solution exist.
Such an example can be found in the feedback system \citep[page 235]{gabrielsson2000pharmacokinetic} modeled by the ODEs
\begin{eqnarray}
\frac{dR(t)}{dt}&=&k_{in}-k_{out}R(t)(1+M(t)),\nonumber\\
\frac{dM(t)}{dt}&=&k_{tol}(R(t)-M(t)),\nonumber
\end{eqnarray}
where $R(t)$ and $M(t)$ stand for loss of response and modulator at time $t$ respectively. Here $k_{in}, k_{out}$ and $k_{tol}$ are unknown parameters which have to be estimated from the noisy observations given by
\begin{eqnarray}
Y_R(t)&=&R(t)+\varepsilon_R(t),\nonumber\\
Y_M(t)&=&M(t)+\varepsilon_M(t),\nonumber
\end{eqnarray}
$\varepsilon_R(t), \varepsilon_M(t)$ being the respective noises at time point $t$. Another popular example is the Lotka-Volterra equations, also known as predator-prey equations. The prey and predator populations change over time according to the equations
\begin{eqnarray}
\frac{dp_1(t)}{dt}&=&\alpha p_1(t)+\beta p_1(t)p_2(t),\nonumber\\
\frac{dp_2(t)}{dt}&=&\gamma p_2(t)+\delta p_1(t)p_2(t),\nonumber
\end{eqnarray}
where $p_1(t)$ and $p_2(t)$ denote the prey and predator populations at time $t$ respectively.\\
\indent If the ODEs can be solved analytically, then the usual non linear least squares (NLS) [\citet{levenberg1944method}, \citet{marquardt1963algorithm}] can be used to estimate the unknown parameters. In most of the practical situations, such closed form solutions are not available as evidenced in the previous two examples. NLS was modified for this purpose by \citet{bard1974nonlinear}, \citet{domselaar1975nonlinear} and \citet{benson1979parameter}. They took initial values of the parameters and integrated both sides of \eqref{intro} to obtain $\bm f_{\bm\theta}$. Then $\bm\theta$ was estimated by minimizing the deviation of the observations from the corresponding $\bm f_{\bm\theta}$. These approaches suffer from the drawback of depending heavily on the initial choices of the parameters. \citet{hairer1987n} and \citet{mattheij1996ordinary} used the 4-stage Runge-Kutta algorithm as an alternative approach. The statistical properties of the corresponding estimator have been studied by \citet{xue2010sieve}. The strong consistency, $\sqrt{n}$-consistency and asymptotic normality of the estimator were established in their work.\\
\indent \citet{ramsay2007parameter} proposed the generalized profiling procedure where the solution is approximated by a linear combination of basis functions. The coefficients of the basis functions are estimated by solving a penalized optimization problem using an initial choice of the parameters of interest. A data-dependent fitting criterion is constructed which contains the estimated coefficients. Then ${\bm\theta}$ is estimated by the maximizer of this criterion. \citet{qi2010asymptotic} explored the statistical properties of this estimator including $\sqrt{n}$-consistency and asymptotic normality. Despite having desirable statistical properties, these approaches are computationally cumbersome especially for high-dimensional systems of ODEs as well as when $\bm\theta$ is high-dimensional.\\
\indent \citet{varah1982spline} introduced an approach of two step procedure. In the first step each of the state variables is approximated by a cubic spline using least squares technique. In the second step, the corresponding derivatives are estimated by differentiating the nonparametrically fitted curve and the parameter estimate is obtained by minimizing the sum of squares of difference between the derivatives of the fitted spline and the derivatives suggested by the ODEs at the design points. This method does not depend on the initial or boundary conditions of the state variables and is computationally quite efficient. An example given in \citet{voit2004decoupling} showed the computational superiority of the two stage approach over the usual least squares technique. \citet{brunel2008parameter} replaced the sum of squares of the second step by a weighted integral of the squared deviation and proved $\sqrt{n}$-consistency as well as asymptotic normality of the estimator so obtained. The order of the B-spline basis was determined by the smoothness of $\bm F(\cdot,\cdot,\cdot)$ with respect to its first two arguments. \citet{gugushvili2012n} used the same approach but used kernel smoothing instead of spline. They also established $\sqrt{n}$-consistency of the estimator. Another modification has been made in the work of \citet{wu2012numerical}. They have used penalized smoothing spline in the first step and numerical derivatives instead of actual derivatives of the nonparametrically estimated functions.\\
\indent In ODE models Bayesian estimation was considered in the works of \citet{gelman1996physiological}, \citet{girolami2008bayesian}. First they solved the ODEs numerically to approximate the expected response and hence constructed the likelihood. A prior was assigned on $\bm\theta$ and MCMC technique was used to generate samples from the posterior. Computation cost might be an issue in this case as well. The theoretical aspects of Bayesian estimation methods have not been explored in the literature.\\
\indent In this paper we consider a Bayesian analog of the approach of \citet{brunel2008parameter} fitting a nonparametric regression model using B-spline basis. A prior has been induced on $\bm\theta$ using the prior assigned on the coefficients of the basis functions. In this paper we study the asymptotic properties of the posterior distribution of $\bm\theta$ and establish a Bernstein-von Mises theorem. We let the ODE model to be misspecified, that is, the true regression function may not be a solution of the ODE. The response variable is also allowed to be multidimensional with possibly correlated errors. Normal distribution is used as the working model for error distribution, but the true distribution of errors may be different. Interestingly, the original model is parametric but it is embedded in a nonparametric model, which is further approximated by high dimensional parametric models. Note that the slow rate of nonparametric estimation does not influence the convergence rate of the parameter in the original parametric model.\\
\indent The paper is organized as follows. Section 2 contains the description of the notations and the model as well as the priors used for the analysis. The main results are given in Section 3. We extend the results to a more generalized set up in Section 4. In Section 5 we have carried out a simulation study under different settings. Proofs are given in Section 6.
\section{Notations, model assumption and prior specification}
We describe a set of notations to be used in this paper. For a matrix $\bm A$, the symbols $\bm A_{i,}$ and $\bm A_{,j}$ stand for the $i^{th}$ row and $j^{th}$ column of $\bm A$ respectively. We use the notation $\bm A_{r:s,}$ with $r<s$ to denote the sub-matrix of $\bm$ A consisting of $r^{th}$ to $s^{th}$ rows of $\bm A$. Similarly, we can define $\bm A_{,r:s}$ and $\bm x_{r:s}$ for a vector $\bm x$. By $\mathrm{vec}(\bm A)$, we mean the vector obtained by stacking the columns of the matrix $\bm A$ one over another. For an $m\times n$ matrix $\bm A$ and a $p\times q$ matrix $\bm B$, $\bm A\otimes\bm B$ denotes the Kronecker product between $\bm A$ and $\bm B$; see \citet{steeb2006problems} for the definition. An identity matrix of order $p$ is denoted by $\bm I_p$.
For a vector $\bm x\in\mathbb{R}^p$, we denote $\|\bm x\|=\left(\sum_{i=1}^px_i^2\right)^{1/2}$. We denote the $r^{th}$ order derivative of a function $f(\cdot)$ by $f^{(r)}(\cdot)$, that is, $f^{(r)}(t)=\frac{d^r}{dt^r}f(t)$. The boldfaced symbol $\bm f(\cdot)$ stands for a vector valued function. For functions $\bm f:[0,1]\rightarrow\mathbb{R}^p$ and $w:[0,1]\rightarrow[0,\infty)$, we define $\|\bm f\|_w=(\int_0^1 \|\bm f(t)\|^2w(t)dt)^{1/2}$. For a real-valued function $f:[0,1]\rightarrow\mathbb{R}$ and a vector $\bm x\in\mathbb{R}^p$, we denote $f(\bm x)=(f(x_1),\ldots,f(x_p))^T$. The notation ``:=" means equality by definition. The notation $\left<\cdot,\cdot\right>$ stands for inner product. For numerical sequences $r_n$ and $s_n$, by $r_n\asymp s_n$ we mean $r_n=O(s_n)$ and $s_n=O(r_n)$, while $r_n\lesssim s_n$ stands for $r_n=O(s_n)$. We use the notation $a\wedge b$ to imply $\min(a,b)$, $a, b$ being two real numbers. The boldfaced symbols $\bm\E(\cdot)$ and $\bm\Var(\cdot)$ stand for the mean vector and dispersion matrix respectively of a random vector. For the probability measures $P$ and $Q$ defined on $\mathbb{R}^p$, we define the total variation distance $\|P-Q\|_{TV}=\sup_{B\in\mathscr{R}^p}|P(B)-Q(B)|$, where $\mathscr{R}^p$ denotes the Borel $\sigma$-field on $\mathbb R^p$. For an open set $E$, the symbol $C^m(E)$ stands for the collection of functions defined on $E$ with first $m$ continuous partial derivatives with respect to its arguments. Now let us consider the formal description of the model.\\
\indent We have a system of $d$ ordinary differential equations given by
\begin{eqnarray}
\frac{df_{j\bm\theta}(t)}{dt}=F_j(t,\bm f_{\bm\theta}(t),\bm\theta),\,t\in[0,1],\,j=1,\ldots,d,\label{diff}
\end{eqnarray}
where $\bm f_{\bm\theta}(\cdot)=(f_{1\bm\theta}(\cdot),\ldots,f_{d\bm\theta}(\cdot))^T$ and $\bm\theta\in\bm\Theta$, where we assume that $\bm\Theta$ is a compact subset of $\mathbb{R}^p$. We also assume that for a fixed $\bm\theta$, $\bm F\in C^{m-1}((0,1),\mathbb{R}^d)$ for some integer $m\geq 1$. Then, by sucessive differentiation of the right hand side of \eqref{diff}, it follows that $\bm f_{\bm\theta}\in C^m((0,1))$. By the implied uniform continuity, the function and its several derivatives uniquely extend to continuous functions on $[0,1]$.\\
\indent Consider an $n\times d$ matrix of observations $\bm Y$ with $Y_{i,j}$ denoting the measurement taken on the $j^{th}$ response at the point $x_i,\,0\leq x_i\leq 1,\, i=1,\ldots,n;\,j=1,\ldots,d$. Denoting $\bm\varepsilon$ as the corresponding matrix of errors, the proposed model is given by
\begin{eqnarray}
Y_{i,j}=f_{j\bm\theta}(x_i)+\varepsilon_{i,j},\,i=1,\ldots,n,\,j=1,\ldots,d,\label{prop}
\end{eqnarray}
whereas the data is generated by the model
\begin{eqnarray}
Y_{i,j}=f_{j0}(x_i)+\varepsilon_{i,j},\,i=1,\ldots,n,\,j=1,\ldots,d,\label{true}
\end{eqnarray}
where $\bm f_0(\cdot)=(f_{10}(\cdot),\ldots,f_{d0}(\cdot))^T$ denotes the true mean which does not necessarily lie in $\{\bm f_{\bm\theta}:\bm\theta\in\bm\Theta\}$. Let $\varepsilon_{i,j}\stackrel{iid}\sim P_0$, which is a probability distribution with mean zero and finite variance $\sigma_0^2$ for $i=1,\ldots,n\,;j=1,\ldots,d$.\\
Since the expression of $\bm f_{\bm\theta}$ is usually not available, the proposed model is embedded in nonparametric regression model
\begin{eqnarray}
\bm Y=\bm X_n\bm B_n+\bm\varepsilon,\label{np}
\end{eqnarray}
where $\bm X_n=(\!(N_{j}(x_i))\!)_{1\leq i\leq n,1\leq j\leq k_n+m-1}$, $\{N_{j}(\cdot)\}_{j=1}^{k_n+m-1}$ being the B-spline basis functions of order $m$ with $k_n-1$ interior knots. Here we denote $\bm B_n=\left(\bm\beta_1^{(k_n+m-1)\times 1},\ldots,\bm\beta_d^{(k_n+m-1)\times 1}\right)$, the matrix containing the coefficients of the basis functions. Also we consider $P_0$ to be unknown and use $N(0,\sigma^2)$ as the working distribution for the error where $\sigma$ may be treated as another unknown parameter. Let us denote by $t_1,\ldots,t_{k_n-1}$ the set of interior knots with $t_l=l/{k_n}$ for $l=1,\ldots,k_n-1$. Hence the meshwidth is $\xi_n={1}/{k_n}$. Denoting by $Q_n$, the empirical distribution function of $x_i,\,i=1,\ldots,n$, we assume
\begin{eqnarray}
\sup_{t\in[0,1]}|Q_n(t)-t|=o(k_n^{-1}).\nonumber
\end{eqnarray}We induce a prior on $\bm\Theta$ from those assigned on $\bm\beta_j$, $j=1,\ldots,d$, by expanding the definition of the parameter beyond the assumed parametric model. We assume
\begin{equation}
\bm\beta_j\stackrel{iid}\sim N_{k_n+m-1}(\bm 0,nk_n^{-1}(\bm X^T_n\bm X_n)^{-1}).\label{prior}
\end{equation}
Simple calculation yields the posterior distribution for $\bm\beta_j$ as
\begin{eqnarray}
\bm\beta_j|\bm Y\sim N_{k_n+m-1}\left({\left(1+\frac{\sigma^2}{nk_n^{-1}}\right)}^{-1}{({\bm X^T_n\bm X_n})}^{-1}{\bm X^T_n\bm Y_{,j}},{\left(\frac{1}{\sigma^2}+\frac{1}{nk_n^{-1}}\right)}^{-1}(\bm X^T_n\bm X_n)^{-1}\right)\nonumber\\\label{posterior}
\end{eqnarray}
and the posterior distributions of $\bm\beta_j$ and $\bm\beta_{j'}$ are mutually independent for $j\neq j';\,j,j'=1,\ldots,d$. By model \eqref{np}, the expected response vector at a point $t\in[0,1]$ is given by $\bm B^T_n\bm N(t)$, where $\bm N(\cdot)=(N_{1}(\cdot),\ldots,N_{k_n+m-1}(\cdot))^T$.\\
\indent Let $w(\cdot)$ be a weight function with $w(0)=w(1)=0$. Denoting $\bm F(\cdot,\cdot,\cdot)=(F_1(\cdot,\cdot,\cdot),\ldots,F_d(\cdot,\cdot,\cdot))^T$, we define
\begin{eqnarray}
R_{\bm f}(\bm\eta)&=&\left\{\int_0^1\|\bm f'(t)-\bm F(t,\bm f(t),\bm\eta)\|^2w(t)dt\right\}^{1/2},\nonumber\\
\bm\psi(\bm f)&=&\arg\min_{\bm\eta\in\bm\Theta}R_{\bm f}(\bm\eta)\nonumber.
\end{eqnarray}
It is easy to check that $\bm\psi(\bm f_{\bm\eta})=\bm\eta$ for all $\bm\eta\in\Theta$. Thus the map $\bm\psi$ extends the definition of the parameter $\bm\theta$ beyond the model. Let us define $\bm\theta_0=\bm\psi(\bm f_0)$.
A prior is induced on $\bm\Theta$ through the mapping $\bm\psi$ acting on $\bm f(\cdot)=\bm B^T_n\bm N(\cdot)$ and the prior on $\bm B_n$ given by equation \eqref{prior}. From now on, we shall write $\bm\theta$ for $\bm\psi(\bm f)$ and treat it as the parameter of interest.
\section{Main results}
Our objective is to study the asymptotic behavior of the posterior distribution of $\sqrt{n}(\bm\theta-\bm\theta_0)$. The asymptotic representation of $\sqrt{n}(\bm\theta-\bm\theta_0)$ is given by the next theorem under the assumption that
\begin{equation}
\text{for all}\,\epsilon>0,\,\,\inf_{\bm\eta:\|\bm\eta-\bm\theta_0\|\geq\epsilon}R_{\bm f_0}(\bm\eta)>R_{\bm f_0}(\bm\theta_0).\label{assmp}
\end{equation}
We denote $D_{l,r,s}\bm F(t,\bm f,\bm\theta)={\partial^{l+r+s}}/{\partial\bm\theta^s\partial\bm f^r\partial t^l}\bm F(t,\bm f(t),\bm\theta)$. Since the posterior distributions of $\bm\beta_j$ are mutually independent when $\bm\varepsilon_{,j}$ are mutually independent for $j=1,\ldots,d$, we can assume $d=1$ in Theorem 1 for the sake of simplicity in notation and write $f(\cdot)$, $f_0(\cdot)$, $F(\cdot,\cdot,\cdot)$, $\bm\beta$ instead of $\bm f(\cdot)$, $\bm f_0(\cdot)$, $\bm F(\cdot,\cdot,\cdot)$ and $\bm B_n$ respectively. Extension to $d$-dimensional case is straightforward as shown in Remark 2 after the statement of Theorem 1. We deal with the situation of correlated errors in Section 5.
\begin{theorem}
Let the matrix
\begin{eqnarray}
\bm J_{\bm\theta_0}&=&\int_0^1(D_{0,0,1} F(t, f_0(t),\bm\theta_0))^TD_{0,0,1} F(t, f_0(t),\bm\theta_0)w(t)dt\nonumber\\
&&-\int_0^1\left(D_{0,0,1}\bm S(t, f_0(t),\bm\theta_0)\right)w(t)dt\nonumber
\end{eqnarray}
be nonsingular, where
\begin{eqnarray}
\bm S(t, f(t),\bm\theta)=(D_{0,0,1} F(t, f(t),\bm\theta))^T(f'_0(t)-F(t, f_0(t),\bm\theta_0)).\nonumber
\end{eqnarray}
Let $m$ and $k_n$ be chosen so that $k_n^{-1}=o(1)$, $\sqrt{n}k_n^{-m}=o(1)$ and $n^{1/2}k_n^{-4}\rightarrow\infty$ as $n\rightarrow\infty$. If $D_{0,1,1}F(t, y,\bm\theta)$, $D_{0,2,1}F(t, y,\bm\theta)$ and $D_{0,0,2}F(t, y,\bm\theta)$ are continuous in its arguments, then under the assumption \eqref{assmp}, there exists $E_n\subseteq\bm\Theta\times C^m((0,1))$ with $\Pi(E^c_n|\bm Y)=o_{P_0}(1)$, such that for $(\bm\theta, f)\in E_n$,
\begin{eqnarray}
\|\sqrt{n}(\bm\theta-\bm\theta_0)-\bm{J}_{\bm\theta_0}^{-1}\sqrt{n}(\bm\Gamma(f)-\bm\Gamma(f_{0}))\|&\lesssim&\sqrt{n}\sup_{t\in[0,1]}\|f(t)-f_0(t)\|^2\nonumber\\
&&+\sqrt{n}\sup_{t\in[0,1]}\|f'(t)-f'_0(t)\|^2,\label{thm1}
\end{eqnarray}
where
\begin{align}
\bm\Gamma(z)=&\int_0^1 \left(-(D_{0,0,1} F(t, f_0(t),\bm\theta_0))^TD_{0,1,0} F(t, f_0(t),\bm\theta_0)w(t)\right.\nonumber\\
&\left.-\frac{d}{dt}[(D_{0,0,1} F(t, f_0(t),\bm\theta_0))^Tw(t)]+\left(D_{0,1,0}\bm S(t, f_0(t),\bm\theta_0)\right)w(t)\right)z(t)dt.\nonumber
\end{align}
\end{theorem}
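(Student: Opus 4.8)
The plan is to recognize $\bm\theta=\bm\psi(f)$ as a Z-estimator and to obtain \eqref{thm1} by linearizing its defining stationarity equation jointly in $\bm\theta$ (about $\bm\theta_0$) and in $f$ (about $f_0$), and then to produce the set $E_n$ by a posterior concentration argument. Since $\bm\psi(f)=\arg\min_{\bm\eta}R_f(\bm\eta)$, the minimizer $\bm\theta$ satisfies the stationarity condition for $R_f(\bm\eta)^2$, namely $\Psi_f(\bm\theta)=\bm 0$, where
\[
\Psi_f(\bm\eta)=\int_0^1(D_{0,0,1}F(t,f(t),\bm\eta))^T\big(f'(t)-F(t,f(t),\bm\eta)\big)w(t)\,dt .
\]
In particular $\Psi_{f_0}(\bm\theta_0)=\bm 0$. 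The entire argument is organized around expanding the identity $\Psi_f(\bm\theta)=\bm 0$.

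\textbf{Identifying the two derivatives.} First I would compute the derivatives of $\Psi$ at $(f_0,\bm\theta_0)$. Differentiating $\Psi_f(\bm\eta)$ in $\bm\eta$ by the product rule, and keeping the residual $f_0'-F(\cdot,f_0,\bm\theta_0)$ (which need not vanish under misspecification), yields $\partial_{\bm\eta}\Psi_f(\bm\eta)|_{(f_0,\bm\theta_0)}=-\bm J_{\bm\theta_0}$; here the $D_{0,0,2}F$-term is exactly $\partial_{\bm\eta}$ of the factor $(D_{0,0,1}F)^T(f_0'-F(\cdot,f_0,\bm\theta_0))=\bm S$, giving the $-\int D_{0,0,1}\bm S\,w$ piece of $\bm J_{\bm\theta_0}$. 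Next, taking the Fr\'echet derivative of $f\mapsto\Psi_f(\bm\theta_0)$ in a direction $z$ and differentiating both the $D_{0,0,1}F$ factor and the residual produces a term in $z$ and a term in $z'$; integrating the $z'$ term by parts and using $w(0)=w(1)=0$ to discard the boundary contribution turns it into $-\int\frac{d}{dt}[(D_{0,0,1}F)^Tw]z$. Collecting the three pieces reproduces the functional $\bm\Gamma$ exactly, so $D_f\Psi_{f_0}(\bm\theta_0)[z]=\bm\Gamma(z)$, and by linearity $\Psi_f(\bm\theta_0)=\bm\Gamma(f)-\bm\Gamma(f_0)+(\text{remainder})$. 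Inverting the nonsingular $\bm J_{\bm\theta_0}$ then gives the leading term $\bm\theta-\bm\theta_0=\bm J_{\bm\theta_0}^{-1}(\bm\Gamma(f)-\bm\Gamma(f_0))+(\text{remainder})$.

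\textbf{Controlling the remainder.} The remainder is handled by second-order Taylor estimates. Expanding $\Psi_f(\bm\theta_0)$ to second order in $f$, the quadratic coefficients are built from $D_{0,1,1}F$, $D_{0,2,1}F$ and the second $f$-derivatives of $F$, all bounded on the compact range swept by $(t,f(t),\bm\theta_0)$ once $f$ is uniformly close to $f_0$; the single mixed contribution $\int(D_{0,1,1}F)^Tz\,z'\,w$ is bounded by $\|z\|_\infty\|z'\|_\infty\le\frac12(\|z\|_\infty^2+\|z'\|_\infty^2)$, which is precisely where the $\sup_t|f'(t)-f_0'(t)|^2$ term in \eqref{thm1} comes from. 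Similarly, the error from replacing $\partial_{\bm\eta}\Psi_f(\tilde{\bm\theta})$ by $-\bm J_{\bm\theta_0}$ is of order $(\|\tilde{\bm\theta}-\bm\theta_0\|+\|f-f_0\|_\infty+\|f'-f_0'\|_\infty)\|\bm\theta-\bm\theta_0\|$, and since $\bm\Gamma$ is bounded in $\|\cdot\|_\infty$ the leading term already forces $\|\bm\theta-\bm\theta_0\|\lesssim\|f-f_0\|_\infty$, so this product is absorbed into the same quadratic bound by AM--GM. Multiplying through by $\sqrt n$ gives \eqref{thm1}. The continuity hypotheses on $D_{0,1,1}F$, $D_{0,2,1}F$ and $D_{0,0,2}F$ are used exactly to make these coefficient bounds uniform over the neighborhood.

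\textbf{Producing $E_n$ and the main obstacle.} It remains to exhibit $E_n$ with $\Pi(E_n^c\mid\bm Y)=o_{P_0}(1)$ on which the neighborhood conditions hold. I would take $E_n=\{(\bm\theta,f):\|f-f_0\|_\infty\le\delta_n,\ \|f'-f_0'\|_\infty\le\delta_n'\}$; the well-separation assumption \eqref{assmp} together with the uniform convergence $R_f\to R_{f_0}$ gives the qualitative consistency $\bm\psi(f)\to\bm\theta_0$ needed to place $\bm\theta$ in the Taylor neighborhood. The concentration is then read from the explicit Gaussian posterior \eqref{posterior}: decomposing $f-f_0$ into the spline bias of order $k_n^{-m}$ and the stochastic fluctuation of $\bm\beta$ about its posterior mean, and invoking standard B-spline sup-norm bounds (with an extra factor of $k_n$ lost upon differentiating for the $f'$ bound), the conditions $\sqrt n\,k_n^{-m}=o(1)$ and $n^{1/2}k_n^{-4}\to\infty$ make the bias $o(n^{-1/2})$ and the quadratic fluctuation remainders $o_{P_0}(1)$ on $E_n$. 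I expect this last step to be the main obstacle: transferring the coefficient-level Gaussian concentration of \eqref{posterior} into \emph{simultaneous} sup-norm control of $f$ and of its derivative $f'$, since differentiation inflates the spline variance by a power of $k_n$, and it is the derivative bound that is most delicate and that dictates the admissible growth of $k_n$.
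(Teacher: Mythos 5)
Your proposal is correct and follows essentially the same route as the paper: the paper likewise subtracts the two stationarity identities $\int(D_{0,0,1}F(t,f,\bm\theta))^T(f'-F)w=0$ and its analogue at $(f_0,\bm\theta_0)$, writes the difference via integral-form Taylor expansions as $\bm M(f,\bm\theta)(\bm\theta-\bm\theta_0)=T_{1n}+T_{2n}+T_{3n}$ with $\bm M(f_0,\bm\theta_0)=\bm J_{\bm\theta_0}$, extracts $\bm\Gamma(f)-\bm\Gamma(f_0)$ using the same integration by parts with $w(0)=w(1)=0$, and bounds the remainders by $\sup_t\|f-f_0\|^2+\sup_t\|f'-f_0'\|^2$ via Cauchy--Schwarz and the stated continuity hypotheses. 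The set $E_n$ is constructed exactly as you describe, from posterior sup-norm concentration of $f$ and $f'$ (the paper's Lemmas 1--2, which is indeed where the conditions on $k_n$ enter) together with posterior consistency of $\bm\theta$ derived from the well-separation assumption.
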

\subsection*{Remark 1}
Condition \eqref{assmp} implies that $\bm\theta_0$ is the unique point of minimum of $R_{\bm f_0}(\cdot)$ and $\bm\theta_0$ should be a well-separated point of minimum.
\subsection*{Remark 2}
When the response is a $d$-dimensional vector, \eqref{thm1} holds with the scalars being replaced by the corresponding $d$-dimensional vectors. Let us denote
\begin{eqnarray}
\bm A(t)&=&\bm J_{\bm\theta_0}^{-1}\{-(D_{0,0,1}\bm F(t,\bm f_0(t),\bm\theta_0))^TD_{0,1,0}\bm F(t,\bm f_0(t),\bm\theta_0)w(t)\nonumber\\
&&-\frac{d}{dt}[(D_{0,0,1}\bm F(t,\bm f_0(t),\bm\theta_0))^Tw(t)]\nonumber\\
&&+\left(D_{0,1,0}\bm S(t,\bm f_0(t),\bm\theta_0)\right)w(t)\},\nonumber
\end{eqnarray}
which is a $p\times d$ matrix. Then we have
\begin{eqnarray}
\bm J_{\bm\theta_0}^{-1}\bm\Gamma(\bm f)&=&\int_0^1\bm A(t)(\bm\beta_1,\ldots,\bm\beta_d)^T\bm N(t)dt\nonumber\\
&=&\sum_{j=1}^d\int_0^1\bm A_{,j}(t)\bm N^T(t)\bm\beta_jdt\nonumber\\
&=&\sum_{j=1}^d\bm G_{n,j}^T\bm\beta_j,\label{linearize}
\end{eqnarray}
where $\bm G_{n,j}^T=\int_0^1\bm A_{,j}(t)\bm N^T(t)dt$ which is a matrix of order $p\times (k_n+m-1)$ for $j=1,\ldots,d$. \citet{bontemps2011bernstein} gave a Bernstein-von Mises theorem for the posterior distribution of a parameter vector $\bm\beta_j$ under the working model $\bm Y_{,j}=\bm X_n\bm\beta_j+\bm\varepsilon_{,j}$, which can be repeatedly applied for $j=1,\ldots,d$. Then in order to approximate the posterior distribution of $\bm\theta$, it suffices to study the asymptotic posterior distribution of the linear combination of $\bm\beta_j$ given by \eqref{linearize}. The next theorem, which is a simple consequence of Theorem 1 above and Theorem 1 and Corollary 1 of \citet{bontemps2011bernstein}, describes the approximate posterior distribution of $\sqrt{n}(\bm\theta-\bm\theta_0)$.
\begin{theorem}

Let us denote
\begin{eqnarray}
\bm\mu_n&=&\sqrt{n}\sum_{j=1}^d\bm G_{n,j}^T(\bm X^T_n\bm X_n)^{-1}\bm X^T_n\bm Y_{,j}-\sqrt{n}\bm J_{\bm\theta_0}^{-1}\bm\Gamma(\bm f_0),\nonumber\\
\bm\Sigma_n&=&n\sum_{j=1}^d\bm G_{n,j}^T(\bm X^T_n\bm X_n)^{-1}\bm G_{n,j}\nonumber
\end{eqnarray}
and $\bm B_j=\left(\!\left(\left<A_{k,j}(\cdot),A_{k',j}(\cdot)\right>\right)\!\right)_{k,k'=1,\ldots,p}$ for $j=1,\ldots,d$. If $\bm B_j$ is non-singular for all $j=1,\ldots,d$, then under the conditions of Theorem 1,
\begin{eqnarray}
\left\|\Pi\left(\sqrt{n}(\bm\theta-\bm\theta_0)\in\cdot|\bm Y\right)-N\left(\bm\mu_n,\sigma^2\bm\Sigma_n\right)\right\|_{TV}=o_{P_0}(1).\label{thm2}
\end{eqnarray}
\end{theorem}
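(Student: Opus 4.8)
The plan is to read the statement off as the composition of three ingredients: the linearization supplied by Theorem 1, the exact Gaussianity of the spline posterior \eqref{posterior}, and the increasing-dimension Bernstein--von Mises theorem of \citet{bontemps2011bernstein}. First I would invoke Theorem 1 to replace the nonlinear quantity $\sqrt{n}(\bm\theta-\bm\theta_0)$ by the linear functional $\bm J_{\bm\theta_0}^{-1}\sqrt{n}(\bm\Gamma(\bm f)-\bm\Gamma(\bm f_0))$, which by the identity \eqref{linearize} of Remark 2 equals $\sqrt{n}(\sum_{j=1}^d\bm G_{n,j}^T\bm\beta_j-\bm J_{\bm\theta_0}^{-1}\bm\Gamma(\bm f_0))$. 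Thus on the event $E_n$ the parameter of interest is, up to a remainder, an explicit affine function of the spline coefficients $\bm\beta_1,\ldots,\bm\beta_d$, whose joint posterior is exactly the product of the Gaussians in \eqref{posterior}.

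The second step is to show that the remainder is asymptotically negligible. On $E_n$ the right-hand side of \eqref{thm1} is bounded by $\sqrt{n}\sup_t\|\bm f(t)-\bm f_0(t)\|^2+\sqrt{n}\sup_t\|\bm f'(t)-\bm f_0'(t)\|^2$, so I would refine $E_n$ to the set where the posterior places almost all its mass and on which the two suprema attain the spline contraction rates. For a spline of order $m$ with $k_n$ knots the sup-norm discrepancy of $\bm f$ splits into a bias of order $k_n^{-m}$ and a fluctuation of order $(k_n\log k_n/n)^{1/2}$, whereas differentiation inflates the fluctuation of $\bm f'$ to order $(k_n^3\log k_n/n)^{1/2}$ with bias $k_n^{-(m-1)}$. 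Multiplying the squared discrepancies by $\sqrt{n}$, the bias terms are handled by $\sqrt{n}k_n^{-m}=o(1)$ (the derivative bias using in addition $k_n=o(n^{1/8})$, which is implied by $n^{1/2}k_n^{-4}\to\infty$), while the enlarged fluctuation of the derivative is killed by $n^{1/2}k_n^{-4}\to\infty$; these are exactly the rate conditions of Theorem 1. Establishing the sup-norm rate for the derivative from the Gaussian posterior \eqref{posterior}, using the $O(k_n)$ size of the B-spline derivatives together with a maximal inequality, is the most laborious part of this step.

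Having reduced matters to the affine functional, I would apply Theorem 1 and Corollary 1 of \citet{bontemps2011bernstein} coordinate-wise in $j$ and use posterior independence of the $\bm\beta_j$. The functional $\sqrt{n}(\sum_j\bm G_{n,j}^T\bm\beta_j-\bm J_{\bm\theta_0}^{-1}\bm\Gamma(\bm f_0))$ is then exactly Gaussian, with mean and dispersion read off from \eqref{posterior}; the shrinkage factors $(1+\sigma^2 k_n/n)^{-1}$ and $(\sigma^{-2}+k_n/n)^{-1}$ converge to $1$ and to $\sigma^2$ because $k_n/n\to0$, so the dispersion differs from $\sigma^2\bm\Sigma_n$ only by the scalar multiple $(1+\sigma^2 k_n/n)\to1$ and the centering differs from $\bm\mu_n$ by $O(k_n/\sqrt{n})$ after standardization. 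Hence the total-variation distance between the two Gaussians is an explicit quantity tending to zero, with Bontemps' theorem certifying that these corrections remain negligible in the regime of growing $k_n$; the non-singularity of each $\bm B_j$ keeps $\bm\Sigma_n$ bounded away from degeneracy, which is what makes the approximation hold in total variation rather than merely weakly.

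The step I expect to be the main obstacle is the transfer of the total-variation statement from the affine functional to $\sqrt{n}(\bm\theta-\bm\theta_0)$ itself. A bound on the \emph{value} of the remainder, as provided by \eqref{thm1}, does not by itself preserve total-variation distance, since an $o(1)$ but posterior-dependent additive perturbation can in principle redistribute a density on a set of non-negligible measure. I would resolve this by exploiting that, because $\bm J_{\bm\theta_0}$ is non-singular, the implicit map $\bm f\mapsto\bm\psi(\bm f)$ is $C^1$ on the shrinking neighbourhood defining $E_n$, with derivative converging to the linear operator $\bm f\mapsto\bm J_{\bm\theta_0}^{-1}\bm\Gamma(\bm f)$. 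Consequently $\sqrt{n}(\bm\theta-\bm\theta_0)$ is a smooth image of the Gaussian vector $(\bm\beta_1,\ldots,\bm\beta_d)$ whose Jacobian is uniformly close to that of the affine map, so the change-of-variables formula shows the two push-forward densities are $L^1$-close. Combining this $C^1$ upgrade of Theorem 1 with the bounded, nondegenerate limiting covariance closes the argument and yields \eqref{thm2}.
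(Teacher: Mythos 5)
Your first three paragraphs reproduce the paper's architecture almost exactly: linearize $\sqrt{n}(\bm\theta-\bm\theta_0)$ via Theorem 1 and the identity \eqref{linearize}, note that the remainder is $\lesssim\sqrt{n}\sup_t\|\bm f-\bm f_0\|^2+\sqrt{n}\sup_t\|\bm f'-\bm f_0'\|^2$ and is killed by the rate conditions, and apply Theorem 1 and Corollary 1 of \citet{bontemps2011bernstein} coordinatewise together with posterior independence of the $\bm\beta_j$ to get the total-variation statement \eqref{thm2_1} for the affine functional. Two small remarks there: the negligibility of the remainder is not something you need to re-derive from sup-norm contraction rates, since it is exactly the content of Lemma 2, which is already proved; and you should also verify the hypotheses of Bontemps' theorem in this growing-dimension regime (the paper checks $\|f_{j0}(\bm x)\|^2/(n^2k_n^{-2})=o(1)$ and $(k_n+m-1)k_n^2/n^2=o(1)$ for this purpose).

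The genuine problem is your last paragraph. You correctly flag the transfer of the total-variation statement from the affine functional to $\sqrt{n}(\bm\theta-\bm\theta_0)$ as the delicate step, but your proposed fix does not work as described. The map $\bm\beta\mapsto\bm\psi(\bm B_n^T\bm N(\cdot))$ goes from $\mathbb{R}^{(k_n+m-1)d}$ to $\mathbb{R}^p$ with $(k_n+m-1)d\gg p$; it is many-to-one and dimension-reducing, so there is no Jacobian change-of-variables formula comparing push-forward densities --- you would need a co-area/disintegration argument along the fibers, which is a substantial undertaking you have not supplied, and the required uniform-in-$n$ control of the ``derivative converging to the linear operator'' over a $(k_n+m-1)d$-dimensional domain is nontrivial. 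The paper's resolution is far simpler and is precisely where the hypothesis that each $\bm B_j$ is non-singular enters: by Lemma 4 the eigenvalues of $\bm\Sigma_n$ are bounded away from zero, so a perturbation of size $\delta_n\to0$ shifts the approximating normal by at most a constant times $\delta_n$ in total variation; combining this with $\Pi(E_n\cap E_n^*|\bm Y)\to1$ (where $E_n^*=\{\sqrt{n}\|\bm\eta_n\|\le\delta_n\}$) and the sandwich \eqref{thm2_2} yields \eqref{thm2}. You instead attribute the non-singularity of $\bm B_j$ vaguely to keeping the approximation ``in total variation rather than merely weakly,'' which misses that its concrete role is to make exactly this shift-perturbation bound, the paper's substitute for your change-of-variables argument, go through. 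As written, your proof of the final step has a gap that the paper's own (much shorter) argument avoids.
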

\subsection*{Remark 3}
Inspecting the proof, we can conclude that \eqref{thm2} is uniform over $\sigma^2$ belonging to a compact subset of $(0,\infty)$. Also note that the scale of the approximating normal distribution involves the working variance $\sigma^2$ assuming that it is given, even though the convergence is studied under the true distribution $P_0$ with variance $\sigma_0^2$, not necessarily equal to the given $\sigma^2$. Thus, the distribution matches with the frequentist distribution of the estimator in \citet{brunel2008parameter} only if $\sigma$ is correctly specified as $\sigma_0$. Below we assure that putting a prior on $\sigma$ rectifies the problem.\\
\indent We assign independent $N(\bm 0, nk^{-1}_n\sigma^2(\bm X_n^TX_n)^{-1})$ prior on $\bm\beta_j$ for $j=1,\ldots,d$, and inverse gamma prior on $\sigma^2$ with parameters $a$ and $b$. Then, the marginal posterior of $\sigma^2$ is also inverse gamma with parameters $(d(n-k_n-m+1)+2a)/2$ and $b+\sum_{j=1}^d\bm Y^T_{,j}(\bm I_n-\bm P_{\bm X_n}(1+(k_n/n))^{-1})\bm Y_{,j}/2$, where $\bm P_{\bm X_n}=\bm X_n(\bm X^T_n\bm X_n)^{-1}\bm X^T_n$. Let us assume that the fourth order moment of the true error distribution is finite. Straightforward calculations show that $|\mathrm E(\sigma^2|\bm Y)-\sigma_0^2|=O_{P_{0}}(n^{-1/2})$ and $\mathrm Var(\sigma^2|\bm Y)=O_{P_{0}}(n^{-1})$. In particular, the marginal posterior distribution of $\sigma^2$ is consistent at the true value of error variance. Let $\mathscr{N}$ be an arbitrary neighborhood of $\sigma_0$. Then, $\Pi(\mathscr{N}^c|\bm Y)=o_{P_0}(1)$. We observe that
\begin{eqnarray}
\lefteqn{\sup_{B\in\mathscr{R}^p}\left|\Pi(\sqrt{n}(\bm\theta-\bm\theta_0)\in B|\bm Y)-\Phi(B;\bm\mu_n,\sigma_0^2\bm\Sigma_n)\right|}\nonumber\\
&\leq&\int\sup_{B\in\mathscr{R}^p}\left|\Pi(\sqrt{n}(\bm\theta-\bm\theta_0)\in B|\bm Y,\sigma)-\Phi(B;\bm\mu_n,\sigma^2\bm\Sigma_n)\right|d\Pi(\sigma|\bm Y)\nonumber\\
&&+\int\sup_{B\in\mathscr{R}^p}\left|\Phi(B;\bm\mu_n,\sigma^2\bm\Sigma_n)-\Phi(B;\bm\mu_n,\sigma_0^2\bm\Sigma_n)\right|d\Pi(\sigma|\bm Y)\nonumber\\
&\leq&\sup_{\sigma\in\mathscr{N}}\sup_{B\in\mathscr{R}^p}\left|\Pi(\sqrt{n}(\bm\theta-\bm\theta_0)\in B|\bm Y,\sigma)-\Phi(B;\bm\mu_n,\sigma^2\bm\Sigma_n)\right|\nonumber\\
&&+\int_{\mathscr{N}}\sup_{B\in\mathscr{R}^p}\left|\Phi(B;\bm\mu_n,\sigma^2\bm\Sigma_n)-\Phi(B;\bm\mu_n,\sigma_0^2\bm\Sigma_n)\right|d\Pi(\sigma|\bm Y)+2\Pi(\mathscr{N}^c|\bm Y).\nonumber
\end{eqnarray}
The second integral is bounded by $\int_{\mathscr{N}}(C_{\sigma_0}|\sigma-\sigma_0|\wedge1)d\Pi(\sigma|\bm Y)$, $C_{\sigma_0}$ being a constant depending only on $\sigma_0$. This bound can be made arbitrarily small by choosing $\mathscr{N}$ sufficiently small. The first integral converges in probability to zero by \eqref{thm2}. The third term converges in probability to zero by the posterior consistency. Hence, we get the desired result. This result can be extended for the situation of correlated errors using a little more manipulation.
\section{Extension}
The results obtained so far can be extended for the case where $\bm\varepsilon_{i,j}$ and $\bm\varepsilon_{i,j'}$ are associated for $i=1,\ldots,n$ and $j\neq j';\,j,j'=1,\ldots,d$. Let under the working model, $\bm\varepsilon_{i,}$ have the dispersion matrix $\bm\Sigma^{d\times d}=\left(\!\left(\sigma_{jk}\right)\!\right)_{j,k=1}^d$ for $i=1,\ldots,n$. Denoting $\bm\Sigma^{-1/2}=(\!(\sigma^{jk})\!)_{j,k=1}^d$, we have the following extension of Theorem 2.
\begin{theorem}

Let us denote
\begin{eqnarray}
\bm\mu^*_n&=&\sqrt{n}\sum_{k=1}^d\left[\left(\bm G^T_{n,1}\ldots\bm G^T_{n,d}\right)\left(\bm\Sigma^{1/2}\otimes\bm I_{k_n+m-1}\right)\right]_{,_{(k-1)(k_n+m-1)+1:k(k_n+m-1)}}\nonumber\\
&&\times{(\bm X^T_n\bm X_n)}^{-1}\bm X_n^T\sum_{j=1}^d\bm Y_j\sigma^{jk}
-\sqrt{n}\bm J_{\bm\theta_0}^{-1}\bm\Gamma(\bm f_0),\nonumber\\
\bm\Sigma^*_n&=&n\sum_{k=1}^d\left[\left(\bm G^T_{n,1}\ldots\bm G^T_{n,d}\right)\left(\bm\Sigma^{1/2}\otimes\bm I_{k_n+m-1}\right)\right]_{,{(k-1)(k_n+m-1)+1:k(k_n+m-1)}}\nonumber\\
&&\times{(\bm X^T_n\bm X_n)}^{-1}\nonumber\\
&&\times\left[\left(\bm G^T_{n,1}\ldots\bm G^T_{n,d}\right)\left(\bm\Sigma^{1/2}\otimes\bm I_{k_n+m-1}\right)\right]^T_{{(k-1)(k_n+m-1)+1:k(k_n+m-1)},}.\nonumber
\end{eqnarray}
Then under the conditions of Theorem 1 and Theorem 2,
\begin{eqnarray}
\left\|\Pi\left(\sqrt{n}(\bm\theta-\bm\theta_0)\in\cdot|\bm Y\right)-N\left(\bm\mu^*_n,\bm\Sigma^*_n\right)\right\|_{TV}=o_{P_0}(1).\label{thm3}
\end{eqnarray}
\end{theorem}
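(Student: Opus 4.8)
The plan is to reduce the correlated-error problem to the independent-error setting of Theorem 2 by a whitening transformation, and then to translate the resulting normal approximation back into the original coordinates. Since under the working model the rows $\bm\varepsilon_{i,}$ are iid with dispersion $\bm\Sigma$, which is symmetric positive definite, the symmetric square root $\bm\Sigma^{1/2}$ and its inverse $\bm\Sigma^{-1/2}=(\!(\sigma^{jk})\!)$ exist. First I would form the transformed response $\tilde{\bm Y}=\bm Y\bm\Sigma^{-1/2}$, so that $\tilde{\bm Y}=\bm X_n\tilde{\bm B}_n+\tilde{\bm\varepsilon}$ with $\tilde{\bm B}_n=\bm B_n\bm\Sigma^{-1/2}$ and $\tilde{\bm\varepsilon}=\bm\varepsilon\bm\Sigma^{-1/2}$; the rows of $\tilde{\bm\varepsilon}$ then have dispersion $\bm\Sigma^{-1/2}\bm\Sigma\bm\Sigma^{-1/2}=\bm I_d$, so the $d$ columns of the transformed problem carry independent, unit-variance errors. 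Because $\bm\theta=\bm\psi(\bm f)$ depends on the data only through the fitted mean curve $\bm f=\bm B_n^T\bm N(\cdot)$, which is unchanged upon re-expressing $\bm B_n=\tilde{\bm B}_n\bm\Sigma^{1/2}$, both $\bm\theta$ and $\bm\theta_0$, and the deterministic coefficients $\bm G_{n,j}$ of the linearization in Remark 2, are untouched by the transformation.

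The key algebraic step is to rewrite the linear functional \eqref{linearize} in the transformed coefficients $\tilde{\bm\beta}_k:=(\tilde{\bm B}_n)_{,k}$. Writing $\bm b=\mathrm{vec}(\bm B_n)$ and $\tilde{\bm b}=\mathrm{vec}(\tilde{\bm B}_n)$, the vec identity together with symmetry of $\bm\Sigma^{-1/2}$ gives $\bm b=(\bm\Sigma^{1/2}\otimes\bm I_{k_n+m-1})\tilde{\bm b}$, whence
\[
\sum_{j=1}^d\bm G_{n,j}^T\bm\beta_j=\left(\bm G_{n,1}^T\cdots\bm G_{n,d}^T\right)(\bm\Sigma^{1/2}\otimes\bm I_{k_n+m-1})\tilde{\bm b}=\sum_{k=1}^d\tilde{\bm G}_{n,k}^T\tilde{\bm\beta}_k,
\]
where $\tilde{\bm G}_{n,k}^T$ is precisely the $k$-th column block of $(\bm G_{n,1}^T\cdots\bm G_{n,d}^T)(\bm\Sigma^{1/2}\otimes\bm I_{k_n+m-1})$ appearing in the statement. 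I would similarly verify that the matrix-variate normal prior on $\bm B_n$ with column dispersion $\bm\Sigma$ maps under $\bm\Sigma^{-1/2}$ to the product prior $\tilde{\bm\beta}_k\stackrel{iid}\sim N(\bm 0,nk_n^{-1}(\bm X_n^T\bm X_n)^{-1})$, i.e. the prior of \eqref{prior} with $\sigma^2=1$; hence the transformed problem satisfies exactly the hypotheses of Theorem 2 with working variance one.

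Applying Theorem 2 to the whitened problem, the approximation \eqref{thm1} of Theorem 1 is unaffected, being a deterministic bound holding on an event $E_n$ of posterior probability tending to one, and the marginal posterior concentration of each $\tilde{\bm\beta}_k$, hence of $\bm f$, is of the same form as before. It therefore suffices to read off the normal approximation to the posterior of $\sqrt{n}\sum_k\tilde{\bm G}_{n,k}^T\tilde{\bm\beta}_k$. By \eqref{posterior} with $\sigma^2=1$ the posterior mean of $\tilde{\bm\beta}_k$ is, up to an asymptotically negligible shrinkage factor, $(\bm X_n^T\bm X_n)^{-1}\bm X_n^T\tilde{\bm Y}_{,k}$, and $\tilde{\bm Y}_{,k}=\sum_j\bm Y_{,j}\sigma^{jk}$; substituting and subtracting the centering $\sqrt{n}\bm J_{\bm\theta_0}^{-1}\bm\Gamma(\bm f_0)$ reproduces $\bm\mu^*_n$, while the posterior covariance $n\sum_k\tilde{\bm G}_{n,k}^T(\bm X_n^T\bm X_n)^{-1}\tilde{\bm G}_{n,k}$ reproduces $\bm\Sigma^*_n$. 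Since whitening is a fixed nonsingular linear map on the data and the functional of interest is unchanged, total-variation convergence is preserved, yielding \eqref{thm3}.

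The main obstacle I anticipate is not conceptual but bookkeeping, together with two verifications. First, the per-coordinate Bernstein--von Mises result of \citet{bontemps2011bernstein} must be invoked for each transformed column, which requires the transformed inner-product matrices to remain nonsingular after mixing the original $A_{k,j}$ through $\bm\Sigma^{1/2}$; I would deduce this from the nonsingularity of the $\bm B_j$ assumed in Theorem 2 together with invertibility of $\bm\Sigma$. Second, the event $E_n$ on which \eqref{thm1} holds must still have posterior probability $1-o_{P_0}(1)$ under the correlated posterior, which follows because the whitened posterior factorizes into $d$ independent pieces of the Theorem 2 type. Care is also needed to confirm that the block-extraction indexing in the statement matches the columnwise stacking used in the vec identities, and that both the negligible shrinkage factor and the remainder in \eqref{thm1} remain $o_{P_0}(1)$ after the transformation.
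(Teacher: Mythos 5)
Your whitening strategy is sound in outline and lands on the correct objects, but it operates at a different level from the paper's argument. The paper works with the exact joint posterior: it computes the matrix-normal posterior of $\mathrm{vec}(\bm B_n)$ under the correlated working likelihood together with the original independent prior, decorrelates it with the finite-sample transformation $(\bm\Sigma^{-1}+k_n\bm I_d/n)^{1/2}\otimes\bm I_{k_n+m-1}$, and then pays for the discrepancy between that transformation and $\bm\Sigma^{-1/2}$ twice: once by showing that the entries of $(\bm\Sigma+k_n\bm\Sigma^2/n)^{-1/2}$ differ from the entries $\sigma^{jk}$ of $\bm\Sigma^{-1/2}$ by $O(k_n/n)$, so the two candidate normal approximations of each block merge in total variation, and once through a Kullback--Leibler comparison of the resulting normal laws. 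You instead whiten the data and coefficients up front, which, if it worked exactly, would let you quote Theorem 2 verbatim and skip both comparisons. Your vec-identity bookkeeping, the identification $\tilde{\bm Y}_{,k}=\sum_j\bm Y_{,j}\sigma^{jk}$, and the resulting $\bm\mu_n^*$ and $\bm\Sigma_n^*$ are all correct.

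The gap is the prior. Theorem 3 retains the prior of \eqref{prior}: the columns $\bm\beta_j$ of $\bm B_n$ are independent $N(\bm 0,nk_n^{-1}(\bm X_n^T\bm X_n)^{-1})$, as is visible from the prior term $\sum_{j=1}^d\bm\beta_j^T(nk_n^{-1})^{-1}\bm X_n^T\bm X_n\bm\beta_j$ in the log posterior. Under your substitution $\tilde{\bm B}_n=\bm B_n\bm\Sigma^{-1/2}$ this prior acquires column dispersion $\bm\Sigma^{-1}$, so the whitened prior does not factor into independent columns and the transformed problem does not satisfy ``exactly the hypotheses of Theorem 2.'' You sidestep this by positing a matrix-normal prior on $\bm B_n$ with column dispersion $\bm\Sigma$, which does factor after whitening, but that is a different prior from the one the theorem concerns. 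The repair is available but must be written out: since the prior precision is of order $k_n/n$ relative to the likelihood precision, the posteriors under the two priors are within $o_{P_0}(1)$ in total variation --- and establishing that is essentially the same estimate the paper carries out when comparing $(\bm\Sigma+k_n\bm\Sigma^2/n)^{-1/2}$ with $\bm\Sigma^{-1/2}$. The complication you hoped to avoid therefore reappears as a prior mismatch. Two smaller points: nonsingularity of the Gram matrices for the whitened problem does not follow automatically from nonsingularity of the $\bm B_j$ plus invertibility of $\bm\Sigma$, because the Gram matrix of the mixed functions $[\bm A(\cdot)\bm\Sigma^{1/2}]_{,k}$ involves cross-products of columns of $\bm A(\cdot)$ for $j\neq j'$, which the hypotheses of Theorem 2 do not control; and when $P_0$ does not have dispersion $\bm\Sigma$ the true errors of the whitened columns remain correlated across $k$, which is harmless for the posterior computation but should be acknowledged when the columnwise $o_{P_0}(1)$ statements are combined.
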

\subsection*{Remark 4}
In many applications, the regression function is modeled as $\bm h_{\bm\theta}(t)=\bm g(\bm f_{\bm\theta}(t))$ instead of $\bm f_{\bm\theta}(t)$, where $\bm g$ is a known invertible function and $\bm h_{\bm\theta}(t)\in\mathbb{R}^d$. It should be noted that
\begin{eqnarray}
\frac{d\bm h_{\bm\theta}(t)}{dt}=\bm g'(\bm f_{\bm\theta}(t))\frac{d\bm f_{\bm\theta}(t)}{dt}&=&\bm g'(\bm g^{-1}\bm h_{\bm\theta}(t))\bm F(t,\bm g^{-1}\bm h_{\bm\theta}(t),\bm\theta)\nonumber\\
&=&\bm H(t,\bm h_{\bm\theta}(t),\bm\theta),\nonumber
\end{eqnarray}
which is known function of $t, \bm h_{\bm\theta}$ and $\bm\theta$. Now we can carry out our analysis replacing $\bm F$ and $\bm f_{\bm\theta}$ in \eqref{intro} by $\bm H$ and $\bm h_{\bm\theta}$ respectively.
\section{Simulation Study}
We consider two differential equation models to study the posterior distribution of $\bm\theta$. In each case we consider both the situations where the true regression function belongs to the solution set and lies outside the solution set. For a sample of size $n$, the $x_i$'s are chosen as $x_i=(2i-1)/2n$ for $i=1,\ldots,n$. We consider sample sizes 50, 100, 200, 500 and 1000. We simulate 1000 replications for each case. Under each replication a sample of size 1000 is directly drawn from the posterior distribution of $\bm\theta$ and then the 95\% credible interval is obtained. We calculate the coverage as well as the length of the corresponding credible interval. The average length of the credible interval is obtained across various replications. The estimated standard errors of the interval length and coverage are given inside the parentheses in the tables. The same procedure has been repeated to construct the 95$\%$ bootstrap percentile intervals. 
\subsection*{Example 1}
The ODE is given by
\begin{eqnarray}
F(t,f(t),\theta)=\theta t-\theta tf(t),\,\,t\in[0,1],\nonumber
\end{eqnarray}
with initial condition $f_{\theta}(0)=2$.
The corresponding solution is
\begin{equation}
f_{\theta}(t)=1+\exp(-\theta t^2/2).\nonumber
\end{equation}
Now we consider two cases.
\subsection*{Case 1}
The true regression function is in the model. We take $\theta_0=1$ to be the true value of the parameter.
\subsection*{Case 2}
The true mean is taken outside the solution set of the ODE:
\begin{equation}
f_0(t)=1+\exp(-t^2/2)+0.02\sin(4\pi t).\nonumber
\end{equation}
The true distribution of error is taken either $N(0,1)$ or $t$-distribution with $6$ degrees of freedom. We put an inverse gamma prior on $\sigma^2$ with both parameters being $1$. The simulation results are summarized in the tables and 2 which show the superiority of the Bayesian approach.

\begin{table}[h]
\centering
\caption{\textit{Coverages and average lengths of the Bayesian credible interval and bootstrap percentile interval of $\theta$ for normally distributed errors}}
\begin{tabular}{|c|cc|cc|cc|cc|}
\hline
\multirow{2}{*}{$n$}&\multicolumn{4}{|c|}{Case 1}&\multicolumn{4}{|c|}{Case 2}\\
\cline{2-9}
&\multicolumn{2}{|c|}{Bayes}&\multicolumn{2}{|c|}{Bootstrap}&\multicolumn{2}{|c|}{Bayes}&\multicolumn{2}{|c|}{Bootstrap}\\
\hline
&coverage&length&coverage&length&coverage&length&coverage&length\\
&(se)&(se)&(se)&(se)&(se)&(se)&(se)&(se)\\
50&99.0&9.54&71.4&5.09&98.7&9.54&71.0&5.10\\
&(0.01)&(1.65)&(0.06)&(1.04)&(0.02)&(1.68)&(0.06)&(1.04)\\
100&97.5&4.80&86.8&3.24&97.6&4.79&86.8&3.24\\
&(0.02)&(0.90)&(0.03)&(0.44)&(0.02)&(0.90)&(0.03)&(0.44)\\
200&95.7&3.04&90.6&2.38&95.5&3.03&90.5&2.38\\
&(0.01)&(0.54)&(0.02)&(0.25)&(0.01)&(0.53)&(0.02)&(0.25)\\
500&97.1&1.77&95.7&1.58&96.5&1.76&95.7&1.58\\
&(0.01)&(0.22)&(0.01)&(0.12)&(0.01)&(0.22)&(0.01)&(0.12)\\
1000&95.9&1.20&96.4&1.13&96.0&1.20&96.5&1.14\\
&(0.01)&(0.12)&(0.01)&(0.07)&(0.01)&(0.12)&(0.01)&(0.07)\\
\hline
\end{tabular}
\end{table}

\begin{table}[h]
\centering
\caption{\textit{Coverages and average lengths of the Bayesian credible interval and bootstrap percentile interval of $\theta$ for errors having t distribution with 6 degrees of freedom}}
\begin{tabular}{|c|cc|cc|cc|cc|}
\hline
\multirow{2}{*}{$n$}&\multicolumn{4}{|c|}{Case 1}&\multicolumn{4}{|c|}{Case 2}\\
\cline{2-9}
&\multicolumn{2}{|c|}{Bayes}&\multicolumn{2}{|c|}{Bootstrap}&\multicolumn{2}{|c|}{Bayes}&\multicolumn{2}{|c|}{Bootstrap}\\
\hline
&coverage&length&coverage&length&coverage&length&coverage&length\\
&(se)&(se)&(se)&(se)&(se)&(se)&(se)&(se)\\
50&95.1&9.71&68.0&6.56&94.1&9.67&67.8&6.57\\
&(0.03)&(2.02)&(0.06)&(2.00)&(0.03)&(2.01)&(0.06)&(2.00)\\
100&94.4&5.06&84.8&3.98&94.7&5.04&84.8&3.98\\
&(0.02)&(1.07)&(0.04)&(0.81)&(0.02)&(1.06)&(0.04)&(0.81)\\
200&94.5&3.43&90.0&2.89&94.4&3.42&90.1&2.89\\
&(0.02)&(0.70)&(0.02)&(0.42)&(0.02)&(0.68)&(0.02)&(0.42)\\
500&93.5&2.07&91.8&1.90&93.2&2.05&91.8&1.90\\
&(0.01)&(0.32)&(0.01)&(0.18)&(0.01)&(0.32)&(0.01)&(0.18)\\
1000&94.9&1.44&94.8&1.38&94.9&1.43&95.0&1.38\\
&(0.01)&(0.17)&(0.01)&(0.10)&(0.01)&(0.17)&(0.01)&(0.10)\\
\hline
\end{tabular}
\end{table}
\subsection*{Example 2}
We take $p=d=2$ and the ODE's are given by
\begin{eqnarray}
F_1(t, \bm f(t), \bm\theta)&=&\theta_1f_1(t),\nonumber\\
F_2(t, \bm f(t), \bm\theta)&=&2\theta_2f_1(t)+\theta_1f_2(t),\,\,t\in[0,1],\nonumber
\end{eqnarray}
with initial condition $f_{1\bm\theta}(0)=f_{2\bm\theta}(0)=1$.
The solution to the above system is given by
\begin{eqnarray}
f_{1\bm\theta}(t)&=&\exp(\theta_1t),\nonumber\\
f_{2\bm\theta}(t)&=&(2\theta_2t+1)\exp(\theta_1t).\nonumber
\end{eqnarray}
Again we consider two cases.
\subsection*{Case 1}
The true regression function is in the model. We take $\theta_{10}=\theta_{20}=1$ to be the true value of the parameter.
\subsection*{Case 2}
The true mean vector is taken outside the solution set of the ODE:
\begin{eqnarray}
f_{10}(t)&=&\exp(t)+0.1\sin(4\pi t),\nonumber\\
f_{20}(t)&=&(2t+1)\exp(t)+0.45\cos(4\pi t).\nonumber
\end{eqnarray}
For each case the true distribution of error is taken a standard normal distribution. Here we use $\sigma=1$, that is the true error variance. We obtained the $95\%$ credible and bootstrap percentile intervals for $\theta_1$ and $\theta_2$ separately and calculated the corresponding coverages and lengths.\\
\indent The simulation results are summarized in tables 3 and 4. The tables show that the Bayesian method works much better than the bootstrap technique across different sample sizes.
\begin{table}[h]
\centering
\caption{\textit{Coverages and average lengths of the Bayesian credible interval and bootstrap percentile interval to estimate $\theta_1$ for normally distributed errors}}
\begin{tabular}{|c|cc|cc|cc|cc|}
\hline
\multirow{2}{*}{$n$}&\multicolumn{4}{|c|}{Case 1}&\multicolumn{4}{|c|}{Case 2}\\
\cline{2-9}
&\multicolumn{2}{|c|}{Bayes}&\multicolumn{2}{|c|}{Bootstrap}&\multicolumn{2}{|c|}{Bayes}&\multicolumn{2}{|c|}{Bootstrap}\\
\hline
&coverage&length&coverage&length&coverage&length&coverage&length\\
&(se)&(se)&(se)&(se)&(se)&(se)&(se)&(se)\\
50&100.0&4.34&94.5&3.40&100.0&4.30&90.6&3.44\\
&(0.00)&(0.58)&(0.03)&(1.11)&(0.00)&(2.25)&(0.04)&(1.11)\\
100&99.9&3.20&97.6&2.28&99.9&3.17&96.7&2.33\\
&(0.00)&(0.43)&(0.02)&(0.40)&(0.00)&(0.42)&(0.02)&(0.45)\\
200&100.0&2.28&92.7&1.56&100.0&2.27&93.4&1.62\\
&(0.00)&(0.30)&(0.02)&(0.19)&(0.00)&(0.29)&(0.02)&(0.21)\\
500&99.8&1.18&85.2&0.79&99.7&1.20&74.5&0.87\\
&(0.00)&(0.14)&(0.02)&(0.08)&(0.00)&(0.13)&(0.02)&(0.09)\\
1000&99.9&0.73&85.0&0.49&99.6&0.76&64.6&0.55\\
&(0.00)&(0.08)&(0.01)&(0.04)&(0.00)&(0.07)&(0.02)&(0.05)\\
\hline
\end{tabular}
\end{table}

\begin{table}[h]
\centering
\caption{\textit{Coverages and average lengths of the Bayesian credible interval and bootstrap percentile interval to estimate $\theta_2$ for normally distributed errors}}
\begin{tabular}{|c|cc|cc|cc|cc|}
\hline
\multirow{2}{*}{$n$}&\multicolumn{4}{|c|}{Case 1}&\multicolumn{4}{|c|}{Case 2}\\
\cline{2-9}
&\multicolumn{2}{|c|}{Bayes}&\multicolumn{2}{|c|}{Bootstrap}&\multicolumn{2}{|c|}{Bayes}&\multicolumn{2}{|c|}{Bootstrap}\\
\hline
&coverage&length&coverage&length&coverage&length&coverage&length\\
&(se)&(se)&(se)&(se)&(se)&(se)&(se)&(se)\\
50&99.9&6.57&87.5&5.18&100.0&6.51&87.1&5.26\\
&(0.00)&(1.15)&(0.05)&(2.63)&(0.00)&(1.15)&(0.05)&(2.53)\\
100&100.0&4.61&87.4&3.22&99.9&4.57&87.5&3.31\\
&(0.00)&(0.75)&(0.03)&(0.78)&(0.00)&(0.74)&(0.03)&(0.93)\\
200&100.0&3.20&88.4&2.13&100.0&3.18&85.0&2.20\\
&(0.00)&(0.48)&(0.02)&(0.29)&(0.00)&(0.48)&(0.02)&(0.31)\\
500&99.7&1.61&90.4&1.06&99.7&1.64&81.5&1.15\\
&(0.00)&(0.21)&(0.01)&(0.11)&(0.00)&(0.20)&(0.02)&(0.12)\\
1000&99.9&0.98&92.6&0.64&99.3&1.03&76.3&0.71\\
&(0.00)&(0.11)&(0.01)&(0.05)&(0.00)&(0.11)&(0.01)&(0.06)\\
\hline
\end{tabular}
\end{table}
In both examples we observed that the two methods are roughly comparable in term of computational efficiency.
\section{Proofs}
We need to go through a couple of lemmas in order to prove the main results. We denote by $\E_0(\cdot)$ and ${\Var}_0(\cdot)$ the expectation and variance operators respectively with respect to $P_0$ probability. The following lemma helps to estimate the bias of the Bayes estimator.
\begin{lemma}
For $m$ and $k_n$ chosen so that $k_n=o(n)$ and $\sqrt{n}k_n^{-m}=o(1)$, we have
\begin{eqnarray}
\sqrt{n}k_n^{-r-1/2}\sup_{t\in[0,1]}|\E_0(\E(f^{(r)}(t)|\bm Y))-f^{(r)}_{0}(t)|=o(1)\nonumber
\end{eqnarray}
for $r=0,1$.
\end{lemma}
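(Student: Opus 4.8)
The plan is to reduce the posterior-mean bias to a purely deterministic spline-smoothing bias and then control it with standard B-spline approximation and Gram-matrix estimates. Using the posterior law \eqref{posterior}, write $\E(f^{(r)}(t)\mid\bm Y)=c_n\,\bm N^{(r)}(t)^T(\bm X_n^T\bm X_n)^{-1}\bm X_n^T\bm Y$, where $\bm N^{(r)}(t)=(N_1^{(r)}(t),\ldots,N_{k_n+m-1}^{(r)}(t))^T$ and $c_n=(1+\sigma^2 k_n/n)^{-1}$ is the shrinkage factor, so that $c_n\to1$ and $1-c_n\asymp k_n/n$ because $k_n=o(n)$. Taking $\E_0$ and using the data-generating model \eqref{true} replaces $\bm Y$ by $\bm f_0=(f_0(x_1),\ldots,f_0(x_n))^T$, giving the deterministic quantity $\E_0(\E(f^{(r)}(t)\mid\bm Y))=c_n\,\hat{\bm\beta}_0^T\bm N^{(r)}(t)$ with $\hat{\bm\beta}_0=(\bm X_n^T\bm X_n)^{-1}\bm X_n^T\bm f_0$ the least-squares spline coefficients fitting $f_0$.

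Next I would introduce a spline approximant $s^*=(\bm\beta^*)^T\bm N$ (de Boor's quasi-interpolant) for which, since $f_0\in C^m((0,1))$ extends continuously to $[0,1]$, one has simultaneously $\sup_t|s^*(t)-f_0(t)|\lesssim k_n^{-m}$ and $\sup_t|(s^*)^{(r)}(t)-f_0^{(r)}(t)|\lesssim k_n^{-(m-r)}$ for $r=0,1$, and split the bias as
\begin{align}
c_n\hat{\bm\beta}_0^T\bm N^{(r)}(t)-f_0^{(r)}(t)
=&\ (c_n-1)(s^*)^{(r)}(t)
+\big((s^*)^{(r)}(t)-f_0^{(r)}(t)\big)\nonumber\\
&+c_n(\hat{\bm\beta}_0-\bm\beta^*)^T\bm N^{(r)}(t).\nonumber
\end{align}
The middle term is $O(k_n^{-(m-r)})$, so multiplying by $\sqrt{n}k_n^{-r-1/2}$ gives $\sqrt{n}k_n^{-m-1/2}=(\sqrt{n}k_n^{-m})k_n^{-1/2}=o(1)$. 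For the shrinkage term, $(s^*)^{(r)}$ is uniformly bounded (being within $O(k_n^{-(m-r)})$ of the bounded $f_0^{(r)}$), so $|c_n-1|\sup_t|(s^*)^{(r)}(t)|\lesssim k_n/n$; multiplying by $\sqrt{n}k_n^{-r-1/2}$ yields $k_n^{1/2-r}/\sqrt{n}$, which is $o(1)$ for $r=0$ (as $k_n=o(n)$) and trivially for $r=1$.

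The main obstacle is the last term, the deviation of the least-squares coefficients from the approximant coefficients propagated to the $r$-th derivative. Writing $\hat{\bm\beta}_0-\bm\beta^*=(\bm X_n^T\bm X_n)^{-1}\bm X_n^T\bm e$ with $\bm e=(f_0(x_i)-s^*(x_i))_i$ and applying Cauchy--Schwarz,
\begin{align}
|(\hat{\bm\beta}_0-\bm\beta^*)^T\bm N^{(r)}(t)|
&=|\bm e^T\bm X_n(\bm X_n^T\bm X_n)^{-1}\bm N^{(r)}(t)|\nonumber\\
&\leq\|\bm e\|\,\big(\bm N^{(r)}(t)^T(\bm X_n^T\bm X_n)^{-1}\bm N^{(r)}(t)\big)^{1/2}.\nonumber
\end{align}
Here I would invoke the two standard B-spline facts that are the technical heart of the lemma: the Gram matrix satisfies $\bm X_n^T\bm X_n\asymp n k_n^{-1}\bm I$, so $(\bm X_n^T\bm X_n)^{-1}$ has spectral norm $\lesssim k_n/n$, and the derivative basis obeys $\sup_t\|\bm N^{(r)}(t)\|\lesssim k_n^r$ (each differentiation contributes a factor $k_n$, with only $m$ basis functions nonzero at any $t$). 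Both follow from the equispaced interior knots together with the quasi-uniformity ensured by $\sup_t|Q_n(t)-t|=o(k_n^{-1})$, which lets the empirical Gram matrix be compared with its Lebesgue analogue. Since $\|\bm e\|\leq\sqrt{n}\,\|\bm e\|_\infty\lesssim\sqrt{n}\,k_n^{-m}$, these give $|(\hat{\bm\beta}_0-\bm\beta^*)^T\bm N^{(r)}(t)|\lesssim\sqrt{n}\,k_n^{-m}\cdot(k_n/n)^{1/2}k_n^{r}=k_n^{r+1/2-m}$, and multiplying by $\sqrt{n}k_n^{-r-1/2}$ returns $\sqrt{n}k_n^{-m}=o(1)$ once more. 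As all bounds are uniform in $t$ by the locality of the B-spline supports, taking the supremum over $t$ and combining the three estimates yields the claim for $r=0,1$.
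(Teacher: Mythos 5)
Your proposal is correct and follows essentially the same route as the paper's proof: the identical three-term decomposition of the posterior-mean bias into a shrinkage term, the projected spline-approximation error, and the direct approximation error, with the same Cauchy--Schwarz step and the same orders $k_n^{1/2-r}/\sqrt{n}$, $\sqrt{n}k_n^{-m}$ and $\sqrt{n}k_n^{-m-1/2}$ for the three pieces. The only cosmetic difference is that you derive the bound $(\bm N^{(r)}(t))^T(\bm X_n^T\bm X_n)^{-1}\bm N^{(r)}(t)\lesssim k_n^{2r+1}/n$ from the Gram-matrix eigenvalue estimate and $\sup_t\|\bm N^{(r)}(t)\|\lesssim k_n^r$, whereas the paper cites this directly from Zhou and Wolfe (2000).
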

\begin{proof}
We note that $f^{(r)}(t)=(\bm N^{(r)}(t))^T\bm\beta$ for $r=0,1$ with $\bm N^{(r)}(\cdot)$ standing for the $r^{th}$ order derivative of $\bm N(\cdot)$. By \eqref{posterior},
\begin{equation}
\E(f^{(r)}(t)|\bm Y)={\left(1+\frac{k_n\sigma^2}{n}\right)}^{-1}(\bm N^{(r)}(t))^T{({\bm X^T_n\bm X_n})}^{-1}\bm X^T_n\bm Y.\nonumber
\end{equation}
\citet{zhou2000derivative} showed that
\begin{eqnarray}
\Var_0(\E(f^{(r)}(t)|\bm Y))&=&\sigma_0^2{\left(1+\frac{\sigma^2k_n}{n}\right)}^{-2}(\bm N^{(r)}(t))^T{({\bm X^T_n\bm X_n})}^{-1}\bm N^{(r)}(t)\nonumber\\
&\asymp&\frac{k_n^{2r+1}}{n}.\label{var}
\end{eqnarray}
There exists a $\bm\beta^*$ \citep{barrow1979efficientl} such that
\begin{equation}
\sup_{t\in[0,1]}|f^{(r)}_{0}(t)-(\bm N^{(r)}(t))^T\bm\beta^*|=O({k_n^{-(m-r)}}).\label{spldis}
\end{equation}
Now,
\begin{eqnarray}
\E_0(\E(f^{(r)}(t)|\bm{Y}))&=&
{\left(1+\frac{\sigma^2k_n}{n}\right)}^{-1}(\bm N^{(r)}(t))^T{({\bm X^T_n\bm X_n})}^{-1}\bm X^T_nf_{0}(\bm{x}).\nonumber
\end{eqnarray}
For a $\bm\beta^*$ as in \eqref{spldis}, we can write the bias of $\E(f^{(r)}(t)|\mathbf{Y})$ as
\begin{eqnarray}
&&{\left(1+\frac{\sigma^2k_n}{n}\right)}^{-1}(\bm N^{(r)}(t))^T\bm\beta^*-(\bm N^{(r)}(t))^T\bm\beta^*\nonumber\\
&&+{\left(1+\frac{\sigma^2k_n}{n}\right)}^{-1}(\bm N^{(r)}(t))^T{({\bm X^T_n\bm X_n})}^{-1}\bm X^T_n(f_{0}(\bm{x})-\bm X_n\bm\beta^*)\nonumber\\
&&-(f^{(r)}_{0}(t)-(\bm N^{(r)}(t))^T\bm\beta^*).\nonumber
\end{eqnarray}
We note that
\begin{eqnarray}
\lefteqn{\sqrt{n}k_n^{-r-1/2}\sup_{t\in[0,1]}|\E_0(\E(f^{(r)}(t)|\bm{Y}))-f^{(r)}_{0}(t)|}\nonumber\\
&\leq&\sqrt{n}k_n^{-r-1/2}\sup_{t\in[0,1]}\left|{\left(1+\frac{k_n\sigma^2}{n}\right)}^{-1}(\bm N^{(r)}(t))^T\bm\beta^*-(\bm N^{(r)}(t))^T\bm\beta^*\right|\nonumber\\
&&+\sqrt{n}k_n^{-r-1/2}{\left(1+\frac{k_n\sigma^2}{n}\right)}^{-1}\sup_{t\in[0,1]}|(\bm N^{(r)}(t))^T{({\bm X^T_n\bm X_n})}^{-1}\bm X^T_n(f_{0}(\bm{x})-\bm X_n\bm\beta^*)|\nonumber\\
&&+\sqrt{n}k_n^{-r-1/2}\sup_{t\in[0,1]}|f^{(r)}_{0}(t)-(\bm N^{(r)}(t))^T\bm\beta^*|.\nonumber
\end{eqnarray}
The first term on the right hand side of the above is of the order of $k_n^{-r+1/2}/\sqrt{n}$. Using the Cauchy-Schwarz inequality, \eqref{var} and \eqref{spldis}, we can bound the second term up to a constant by $\sqrt{n}k_n^{-m}$. The third term has the order of $\sqrt{n}k_n^{-m-1/2}$ as a result of \eqref{spldis}.
By the conditions imposed on $m$ and $k_n$, we can finally conclude
\begin{equation}
\sqrt{n}k_n^{-r-1/2}\sup_{t\in[0,1]}|\E_0(\E(f^{(r)}(t)|\bm Y))-f^{(r)}_{0}(t)|=o(1).\nonumber
\end{equation}
\end{proof}
The following lemma controls posterior variability.
\begin{lemma}
Let $m$ and $k_n$ be chosen so that $k^{-1}_n=o(1)$, $\sqrt{n}k_n^{-m}=o(1)$ and $n^{1/2}k_n^{-4}\rightarrow\infty$ as $n\rightarrow\infty$. Then for all $\epsilon>0$,
\begin{eqnarray}
\E_0\Pi\left(\sqrt{n}\sup_{t\in[0,1]}(f^{(r)}(t)-f^{(r)}_{0}(t))^2>\epsilon|\bm Y\right)=o(1)\nonumber
\end{eqnarray}
for $r=0,1$.
\end{lemma}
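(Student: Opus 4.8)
The plan is to bound the inner posterior probability by Markov's inequality and then reduce everything to a handful of trace computations, exploiting the local support of the B-spline basis to pass from pointwise to uniform control. By Markov's inequality applied to the posterior,
\begin{eqnarray}
\E_0\Pi\left(\sqrt{n}\sup_{t\in[0,1]}(f^{(r)}(t)-f^{(r)}_0(t))^2>\epsilon\,\Big|\,\bm Y\right)\leq\frac{\sqrt{n}}{\epsilon}\,\E_0\E\left[\sup_{t\in[0,1]}(f^{(r)}(t)-f^{(r)}_0(t))^2\,\Big|\,\bm Y\right],\nonumber
\end{eqnarray}
so it suffices to show $\sqrt{n}\,\E_0\E[\sup_t(f^{(r)}(t)-f^{(r)}_0(t))^2|\bm Y]=o(1)$. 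I would split $f^{(r)}(t)-f^{(r)}_0(t)$ into the posterior fluctuation $f^{(r)}(t)-\E(f^{(r)}(t)|\bm Y)$, the sampling fluctuation of the posterior mean $\E(f^{(r)}(t)|\bm Y)-\E_0\E(f^{(r)}(t)|\bm Y)$, and the bias $\E_0\E(f^{(r)}(t)|\bm Y)-f^{(r)}_0(t)$, and treat each separately via $\sup_t(a+b+c)^2\leq 3(\sup_t a^2+\sup_t b^2+\sup_t c^2)$. The bias term is already handled by Lemma 1, which gives $\sup_t(\cdot)^2=o(k_n^{2r+1}/n)$, so $\sqrt{n}$ times it is $o(k_n^{2r+1}/\sqrt{n})=o(1)$.

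The key spline fact I would invoke is that at any $t$ only $m$ of the basis functions are nonzero and $|N_j^{(r)}(t)|\lesssim k_n^r$; together with the Cauchy--Schwarz inequality this yields, for any coefficient vector $\bm c$,
\begin{eqnarray}
\sup_{t\in[0,1]}\left[(\bm N^{(r)}(t))^T\bm c\right]^2\lesssim k_n^{2r}\|\bm c\|^2.\nonumber
\end{eqnarray}
Applying this with $\bm c=\bm\beta-\E(\bm\beta|\bm Y)$ for the posterior fluctuation reduces the problem to $\E[\|\bm\beta-\E(\bm\beta|\bm Y)\|^2|\bm Y]=\mathrm{tr}(\Var(\bm\beta|\bm Y))$, and with $\bm c=\E(\bm\beta|\bm Y)-\E_0\E(\bm\beta|\bm Y)\propto(\bm X_n^T\bm X_n)^{-1}\bm X_n^T\bm\varepsilon$ for the sampling fluctuation it reduces to $\E_0\|\cdot\|^2\asymp\sigma_0^2\,\mathrm{tr}((\bm X_n^T\bm X_n)^{-1})$. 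Since each diagonal entry of $(\bm X_n^T\bm X_n)^{-1}$ is $\asymp k_n/n$ and there are $k_n+m-1$ of them, both traces are $\asymp k_n^2/n$, so each of these two terms contributes $\lesssim k_n^{2r}\cdot k_n^2/n=k_n^{2r+2}/n$. Multiplying by $\sqrt{n}$ gives $k_n^{2r+2}/\sqrt{n}$, which tends to zero: for $r=0$ this needs only $k_n=o(n^{1/4})$, while for $r=1$ it becomes $k_n^4/\sqrt{n}$, vanishing precisely because $n^{1/2}k_n^{-4}\rightarrow\infty$.

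The main obstacle is the passage from the pointwise variance bound \eqref{var} to a bound uniform in $t$; the crude estimate $\|\bm c\|_\infty\leq\|\bm c\|$ implicit in the display above is what makes $r=1$ the binding case and ties the argument to the condition $n^{1/2}k_n^{-4}\rightarrow\infty$. One must also note that the sampling-fluctuation term requires only the mean-zero, finite-variance structure of $P_0$, through $\E_0[\bm\varepsilon\bm\varepsilon^T]=\sigma_0^2\bm I_n$, and no higher moments, so that working with the $\ell_2$ norm rather than attempting a sharper $\ell_\infty$ maximal-inequality bound is both sufficient and avoids extra distributional assumptions; the factors $(1+\sigma^2 k_n/n)^{-1}$ and $(\sigma^{-2}+k_n/n)^{-1}$ appearing in the posterior mean and variance are bounded and do not affect the rates.
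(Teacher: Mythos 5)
Your proof is correct and arrives at exactly the same rate, $O(n^{-1/2}k_n^{2r+2})$, as the paper, but the technical control of the supremum over $t$ is handled by a genuinely different device. The paper uses the same Markov-plus-decomposition skeleton (bias, sampling fluctuation of the posterior mean, posterior spread), but for the sampling-fluctuation term it discretizes $[0,1]$ into $n$ grid points, bounds the increment between grid points by the mean value theorem applied to $\bm N^{(r+1)}$, and then controls each pointwise quadratic form via $(\bm N^{(r)}(t))^T(\bm X_n^T\bm X_n)^{-1}\bm N^{(r)}(t)\asymp k_n^{2r+1}/n$ (Lemma 5.4 of Zhou and Wolfe), paying an extra factor $k_n$ from $\E_0(\bm\varepsilon^T\bm P^T\bm D\bm P\bm\varepsilon)$; the posterior-spread term is treated analogously through the whitened variable $\bm\varepsilon^*$. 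You instead exploit the bounded local support of the B-spline basis to get the deterministic, pathwise bound $\sup_t|(\bm N^{(r)}(t))^T\bm c|\lesssim k_n^r\|\bm c\|$ (at most $m$ nonzero basis functions, each with $|N_j^{(r)}(t)|\lesssim k_n^r$), which converts both stochastic terms into trace computations $\mathrm{tr}((\bm X_n^T\bm X_n)^{-1})\asymp k_n^2/n$ and $\mathrm{tr}(\Var(\bm\beta|\bm Y))\asymp k_n^2/n$. This buys a shorter, more self-contained argument that avoids the chaining step and the mean value theorem entirely, requires only the eigenvalue bounds on $\bm X_n^T\bm X_n/n$ (which, as in the paper, rest on the quasi-uniformity assumption $\sup_t|Q_n(t)-t|=o(k_n^{-1})$) and second moments of $P_0$, and makes transparent why $r=1$ is the binding case tied to $n^{1/2}k_n^{-4}\rightarrow\infty$; what it gives up is the sharper pointwise variance information $k_n^{2r+1}/n$ of \eqref{var}, which your $\ell_2$ bound inflates by one factor of $k_n$ uniformly — but since the paper's grid argument incurs exactly the same factor $k_n$ through the trace of the hat matrix, nothing is lost in the final rate.
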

\begin{proof}
By Markov's inequality and the fact that $(a+b)^2\leq 2(a^2+b^2)$ for two real numbers $a$ and $b$, we get
\begin{eqnarray}
\lefteqn{\Pi\left(\sup_{t\in[0,1]}\sqrt{n}(f^{(r)}(t)-f^{(r)}_{0}(t))^2>\epsilon|\bm Y\right)}\nonumber\\
&\leq&\epsilon^{-1}\E\left(\sup_{t\in[0,1]}\sqrt{n}(f^{(r)}(t)-f^{(r)}_{0}(t))^2|\bm Y\right)\nonumber\\
&\leq&{2\sqrt{n}}{\epsilon^{-1}}\left\{\sup_{t\in[0,1]}(\E(f^{(r)}(t)|\bm Y)-f^{(r)}_{0}(t))^2\right.\nonumber\\
&&\left.+\E\left[\sup_{t\in[0,1]}(f^{(r)}(t)-\E(f^{(r)}(t)|\bm Y))^2|\bm Y\right]\right\}.\label{chv}
\end{eqnarray}
Now we obtain the asymptotic orders of the expectations of the two terms inside the bracket above. We can write
\begin{eqnarray}
\E_0\sup_{t\in[0,1]}(\E(f^{(r)}(t)|\bm Y)-f^{(r)}_{0}(t))^2&\leq&2\sup_{t\in[0,1]}(\E_0(\E(f^{(r)}(t)|\bm Y))-f^{(r)}_{0}(t))^2\nonumber\\
&&+2\E_0\left[\sup_{t\in[0,1]}(\E(f^{(r)}(t)|\bm Y)-\E_0(\E[f^{(r)}(t)|\bm Y]))^2\right].\label{chv_1}
\end{eqnarray}
We have that
\begin{eqnarray}
A_1&:=&\sup_{t\in[0,1]}\left|\E(f^{(r)}(t)|\bm Y)-\E_0(\E[f^{(r)}(t)|\bm Y])\right|\nonumber\\
&=&\sup_{t\in[0,1]}{\left(1+\frac{\sigma^2k_n}{n}\right)}^{-1}\left|(\bm N^{(r)}(t))^T{({\bm X^T_n\bm X_n})}^{-1}\bm X^T_n(\bm Y-f_{0}(\bm x))\right|\nonumber\\
&=&\sup_{t\in[0,1]}{\left(1+\frac{\sigma^2k_n}{n}\right)}^{-1}\left|(\bm N^{(r)}(t))^T{({\bm X^T_n\bm X_n})}^{-1}\bm X^T_n\bm\varepsilon\right|\nonumber\\
&\leq&{\left(1+\frac{\sigma^2k_n}{n}\right)}^{-1}\left(\max_{1\leq k\leq n}\left|(\bm N^{(r)}(s_k))^T{({\bm X^T_n\bm X_n})}^{-1}\bm X^T_n\bm\varepsilon\right|\right.\nonumber\\
&&\left.+\sup_{t,t':|t-t'|\leq n^{-1}}\left|(\bm N^{(r)}(t)-\bm N^{(r)}(t'))^T{({\bm X^T_n\bm X_n})}^{-1}\bm X^T_n\bm\varepsilon\right|\right)\nonumber,
\end{eqnarray}
where $s_k=k/n$ for $k=1,\ldots,n$. Applying the mean value theorem to the second term of the above sum, we get
\begin{eqnarray}
A_1&\leq&{\left(1+\frac{k_n\sigma^2}{n}\right)}^{-1}\left(\max_{1\leq k\leq n}\left|(\bm N^{(r)}(s_k))^T{({\bm X^T_n\bm X_n})}^{-1}\bm X^T_n\bm \varepsilon\right|\right.\nonumber\\
&&\left.+\sup_{t\in[0,1]}\frac{1}{n}\left|(\bm N^{(r+1)}(t))^T{({\bm X^T_n\bm X_n})}^{-1}\bm X^T_n\bm\varepsilon\right|\right),\nonumber
\end{eqnarray}
and hence,
\begin{eqnarray}
A_1^2&\lesssim&\max_{1\leq k\leq n}\left|(\bm N^{(r)}(s_k))^T{({\bm X^T_n\bm X_n})}^{-1}\bm X^T_n\bm\varepsilon\right|^2\nonumber\\
&&+\sup_{t\in[0,1]}\frac{1}{n^2}\left|(\bm N^{(r+1)}(t))^T{({\bm X^T_n\bm X_n})}^{-1}\bm X^T_n\bm\varepsilon\right|^2.\label{A1}
\end{eqnarray}
By the spectral decomposition, we can write $\bm X_n{({\bm X^T_n\bm X_n})}^{-1}\bm X^T_n=\bm P^T\bm D\bm P$, where $\bm P$ is an orthogonal matrix and $\bm D$ is a diagonal matrix with $k_n+m-1$ ones and $n-k_n-m+1$ zeros in the diagonal. Now using the Cauchy-Schwarz inequality, we get
\begin{eqnarray}
\E_0\left(\max_{1\leq k\leq n}\left|(\bm N^{(r)}(s_k))^T{({\bm X^T_n\bm X_n})}^{-1}\bm X^T_n\bm\varepsilon\right|^2\right)&\leq&\max_{1\leq k\leq n}(\bm N^{(r)}(s_k))^T{({\bm X^T_n\bm X_n})}^{-1}\bm N^{(r)}(s_k)\nonumber\\
&&\times\E_0\left(\bm\varepsilon^T\bm P^T\bm D\bm P\bm\varepsilon\right).\nonumber
\end{eqnarray}
By Lemma 5.4 of \citet{zhou2000derivative} and the fact that $\bm\Var_0(\bm P\bm\varepsilon)=\bm\Var_0(\bm\varepsilon)$, we can conclude that the expectation of the first term on the right hand side of \eqref{A1} is $O((k_n^{2r+1}/n)k_n)$. Again applying the Cauchy-Schwarz inequality, the second term on the right hand side of \eqref{A1} is bounded by $\sup_{t\in[0,1]}\frac{1}{n^2}(\bm N^{(r+1)}(t))^T{(\bm X_n^T\bm X_n)}^{-1} \bm N^{(r+1)}(t)(\bm\varepsilon^T\bm\varepsilon)$, whose expectation is of the order $n(k_n^{2r+3}/n^3)=k_n^{2r+3}/n^2$, using Lemma 5.4 of Zhou and Wolfe (2000).
Thus, $\E_0(A_1^2)=O\left(k_n^{2r+2}/n\right)$. In Lemma 1, we have already proved that
\begin{eqnarray}
\sup_{t\in[0,1]}\left|\E_0(\E(f^{(r)}(t)|\bm{Y}))-f^{(r)}_{0}(t)\right|^2=O\left(k_n^{2r+1}/n\right),\nonumber
\end{eqnarray}
which together with \eqref{chv_1} gives
\begin{equation}
\E_0\left[\sup_{t\in[0,1]}\left|\E(f^{(r)}(t)|\bm Y)-f^{(r)}_{0}(t)\right|^2\right]=O\left(\frac{k_n^{2r+2}}{n}\right).\label{chv_2}
\end{equation}
Now we consider the second term inside the bracket of \eqref{chv}. Let us denote $\bm \varepsilon^*={(\bm X^T_n\bm X_n)}^{1/2}\bm\beta-{\left(1+\frac{\sigma^2k_n}{n}\right)}^{-1}{(\bm X^T_n\bm X_n)}^{-1/2}\bm X^T_n\bm Y$. It should be noted that $\bm\varepsilon^*|\bm Y\sim N(\bm 0, {\left({\sigma^{-2}}+{k_n/n}\right)}^{-1}\bm I_{k_n+m-1})$. Then we have
\begin{eqnarray}
A_2&:=&\sup_{t\in[0,1]}|f^{(r)}(t)-\E[f^{(r)}(t)|\bm Y]|\nonumber\\
&=&\sup_{t\in[0,1]}\left|(\bm N^{(r)}(t))^T\bm\beta-{\left(1+\frac{k_n\sigma^2}{n}\right)}^{-1}(\bm N^{(r)}(t))^T{({\bm X^T_n\bm X_n})}^{-1}\bm X^T_n\bm Y\right|\nonumber\\
&=&\sup_{t\in[0,1]}\left|(\bm N^{(r)}(t))^T{({\bm X^T_n\bm X_n})}^{-1/2}\nonumber\right.\\
&&\left.\times\left[{({\bm X^T_n\bm X_n})}^{1/2}\bm\beta-{\left(1+\frac{\sigma^2k_n}{n}\right)}^{-1}{({\bm X^T_n\bm X_n})}^{-1/2}\bm X^T_n\bm Y\right]\right|\nonumber\\
&=&\sup_{t\in[0,1]}\left|(\bm N^{(r)}(t))^T{({\bm X^T_n\bm X_n})}^{-1/2}\bm\varepsilon^*\right|.\nonumber
\end{eqnarray}
Now using the Cauchy-Schwarz inequality and Lemma 5.4 of \citet{zhou2000derivative}, we get $\E(A_2^2|\bm Y)=O\left(k_n^{2r+1}n^{-1}k_n\right)=O(k_n^{2r+2}/n)$. Combining it with \eqref{chv} and \eqref{chv_2} and utilizing the criteria on $n$ and $k_n$, we have
\begin{eqnarray}
\E_0\Pi(\sup_{t\in[0,1]}\sqrt{n}(f^{(r)}(t)-f^{(r)}_{0}(t))^2>\epsilon|\bm Y)=O\left({n^{-1/2}k_n^{2r+2}}\right)=o(1)\nonumber
\end{eqnarray}
for $r=0,1$. \end{proof}
In order to satisfy the criteria of Lemma 2, we must have $nk_n^{-8}\rightarrow\infty$ and $nk_n^{-2m}\rightarrow0$ as $n\rightarrow\infty$. Thus, $m\geq5$.
Lemmas 1 and 2 can be used to prove the posterior consistency of $\bm\theta$ as shown in the next lemma.
\begin{lemma}
If $m$ and $k_n$ satisfy the criteria of Lemmas 1 and 2, then for all $\epsilon>0$, $\Pi(\|\bm\theta-\bm\theta_0\|>\epsilon|\bm Y)=o_{P_0}(1).$
\end{lemma}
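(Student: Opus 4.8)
The plan is to treat this as a standard argmax (M-estimation) consistency problem. By \eqref{assmp}, $\bm\theta_0=\bm\psi(f_0)$ is the well-separated minimizer of the deterministic criterion $R_{f_0}(\cdot)$, while $\bm\theta=\bm\psi(f)$ is the minimizer of the random criterion $R_f(\cdot)$ built from the posterior draw $f=\bm\beta^T\bm N(\cdot)$. If $R_f$ is uniformly close to $R_{f_0}$ over $\bm\Theta$ with high posterior probability, then its minimizer $\bm\psi(f)$ must lie near $\bm\theta_0$. First I would record the deterministic argmax inequality: since $R_f(\bm\psi(f))\le R_f(\bm\theta_0)$, two applications of the triangle inequality give
\[
R_{f_0}(\bm\psi(f))-R_{f_0}(\bm\theta_0)\le 2\sup_{\bm\eta\in\bm\Theta}|R_f(\bm\eta)-R_{f_0}(\bm\eta)|.
\]
Writing $\delta_\epsilon=\inf_{\|\bm\eta-\bm\theta_0\|\ge\epsilon}R_{f_0}(\bm\eta)-R_{f_0}(\bm\theta_0)$, which is strictly positive by \eqref{assmp}, it follows that the event $\{\|\bm\theta-\bm\theta_0\|\ge\epsilon\}$ is contained in $\{2\sup_{\bm\eta\in\bm\Theta}|R_f(\bm\eta)-R_{f_0}(\bm\eta)|\ge\delta_\epsilon\}$.

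Next I would bound the uniform criterion difference by the $C^1$ distance between $f$ and $f_0$. Regarding $R_f(\bm\eta)$ and $R_{f_0}(\bm\eta)$ as the $\|\cdot\|_w$ norms of $f'-F(\cdot,f(\cdot),\bm\eta)$ and $f_0'-F(\cdot,f_0(\cdot),\bm\eta)$, the reverse triangle inequality for $\|\cdot\|_w$ yields
\[
|R_f(\bm\eta)-R_{f_0}(\bm\eta)|\le \|f'-f_0'\|_w+\|F(\cdot,f(\cdot),\bm\eta)-F(\cdot,f_0(\cdot),\bm\eta)\|_w.
\]
On the event that $\sup_t|f(t)-f_0(t)|$ is bounded by a fixed constant, the arguments $(t,f(t),\bm\eta)$ remain in a compact subset of $[0,1]\times\mathbb{R}\times\bm\Theta$ on which $D_{0,1,0}F$ is continuous, hence bounded by some $L$; the mean value theorem then gives $|F(t,f(t),\bm\eta)-F(t,f_0(t),\bm\eta)|\le L\sup_s|f(s)-f_0(s)|$ uniformly in $t$ and $\bm\eta$. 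Since $w$ is integrable, both terms are controlled, up to a constant, by $\sup_t|f(t)-f_0(t)|+\sup_t|f'(t)-f_0'(t)|$, so that $\sup_{\bm\eta\in\bm\Theta}|R_f(\bm\eta)-R_{f_0}(\bm\eta)|\lesssim\sup_{t\in[0,1]}|f(t)-f_0(t)|+\sup_{t\in[0,1]}|f'(t)-f_0'(t)|$.

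Finally I would invoke Lemma 2. Combining the previous two steps, the event $\{\|\bm\theta-\bm\theta_0\|\ge\epsilon\}$ is contained, on the compactness event, in an event of the form $\{\sup_t|f(t)-f_0(t)|+\sup_t|f'(t)-f_0'(t)|\ge c\,\delta_\epsilon\}$ for a constant $c>0$. Applying Markov's inequality to remove the outer $\E_0$ in Lemma 2 shows that, for $r=0$ and $r=1$, $\Pi(\sqrt{n}\sup_t(f^{(r)}(t)-f_0^{(r)}(t))^2>\epsilon'|\bm Y)=o_{P_0}(1)$ for every $\epsilon'>0$; in particular the posterior probability that $\sup_t|f^{(r)}(t)-f_0^{(r)}(t)|$ exceeds any fixed positive threshold tends to zero in $P_0$-probability, which simultaneously confines $f$ to the required compact tube around $f_0$. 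Hence $\Pi(\|\bm\theta-\bm\theta_0\|>\epsilon|\bm Y)=o_{P_0}(1)$. The main obstacle is the second step: securing the Lipschitz bound on $F$ uniformly over $\bm\eta\in\bm\Theta$ and over the random range of $f$ requires first trapping $f$ in a compact neighborhood of $f_0$, but this trapping is itself delivered with high posterior probability by Lemma 2, so the two steps dovetail.
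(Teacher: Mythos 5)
Your proposal is correct and follows essentially the same route as the paper's proof: the well-separation/argmax inequality reducing the event $\{\|\bm\theta-\bm\theta_0\|\ge\epsilon\}$ to $\{2\sup_{\bm\eta}|R_f(\bm\eta)-R_{f_0}(\bm\eta)|\ge\delta_\epsilon\}$, the triangle-inequality bound of the uniform criterion difference by $\sup_t|f(t)-f_0(t)|+\sup_t|f'(t)-f_0'(t)|$, and Lemma 2 to control these sup-norms in posterior probability. Your explicit handling of the compact tube needed for the Lipschitz bound on $F$ is a slightly more careful rendering of the paper's restriction to the set $T_n=\{\|f-f_0\|\le\tau_n\}$, but the argument is the same.
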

\begin{proof}
We denote the set $T_n=\{\|f-f_0\|\leq\tau_n\}$ for some $\tau_n\rightarrow0$. By Lemma 2, there exists such a sequence $\{\tau_n\}$ so that $\Pi(T^c_n|\bm Y)=o_{P_0}(1)$. By the triangle inequality, for $f\in T_n$,
\begin{eqnarray}
\lefteqn{|R_{ f}(\bm\eta)-R_{ f_0}(\bm\eta)|}\nonumber\\
&=&\left|\left\|f'(\cdot)-F(\cdot, f(\cdot), \bm\eta)\right\|_w-\left\|f_0'(\cdot)-F(\cdot, f_0(\cdot),\bm\eta)\right\|_w\right|\nonumber\\
&\leq&\left\|f'(\cdot)-f'_0(\cdot)\right\|_w+\left\|F(\cdot, f(\cdot), \bm\eta)-F(\cdot, f_0(\cdot), \bm\eta)\right\|_w\nonumber\\
&\leq&c_1\sup_{t\in[0,1]}\|f'(t)-f'_{0}(t)\|+c_2\sup_{t\in[0,1]}\|f(t)-f_{0}(t)\|,\nonumber
\end{eqnarray}
for appropriately chosen constants $c_1$ and $c_2$. Hence for $f\in T_n$,
\begin{eqnarray}
\sup_{\bm\eta\in\bm\Theta}|R_{f}(\bm\eta)-R_{f_0}(\bm\eta)|\leq c_1\sup_{t\in[0,1]}\|f'(t)-f'_{0}(t)\|+c_2\sup_{t\in[0,1]}\|f(t)-f_{0}(t)\|\nonumber
\end{eqnarray}
and for all $\epsilon>0$, $\Pi(\sup_{\bm\eta\in\bm\Theta}|R_{f}(\bm\eta)-R_{f_0}(\bm\eta)|>\epsilon\cap T_n|\bm Y)=o_{P_0}(1)$ due to Lemma 2. By definitions of $\bm\theta$ and $\bm\theta_0$, $R_{f}(\bm\theta)\leq R_{f}(\bm\theta_0)$. By assumption \eqref{assmp}, for $\|\bm\theta-\bm\theta_0\|\geq\epsilon$ there exists a $\delta>0$ such that
\begin{eqnarray}
\delta<R_{f_0}(\bm\theta)-R_{f_0}(\bm\theta_0)
&\leq&R_{f_0}(\bm\theta)-R_{f}(\bm\theta)+R_{f}(\bm\theta_0)-R_{f_0}(\bm\theta_0)\nonumber\\
&\leq&2\sup_{\bm\eta\in\bm\Theta}|R_{f}(\bm\eta)-R_{f_0}(\bm\eta)|.\nonumber
\end{eqnarray}
Consequently,
\begin{eqnarray}
\lefteqn{\Pi(\|\bm\theta-\bm\theta_0\|>\epsilon|\bm Y)}\nonumber\\
&\leq&\Pi(\sup_{\bm\eta\in\bm\Theta}|R_{f}(\bm\eta)-R_{f_0}(\bm\eta)|>{\delta}/{2}|\bm Y)\nonumber\\
&\leq&\Pi\left(\sup_{\bm\eta\in\bm\Theta}|R_{f}(\bm\eta)-R_{f_0}(\bm\eta)|>{\delta}/{2}\cap T_n|\bm Y\right)+\Pi(T^c_n|\bm Y)=o_{P_0}(1).\nonumber
\end{eqnarray}
\end{proof}
\begin{proof}[Proof of Theorem 1]
By definitions of $\bm\theta$ and $\bm\theta_0$,
\begin{eqnarray}
\int_0^1\left(D_{0,0,1}F(t, f(t),\bm\theta)\right)^T(f'(t)-F(t, f(t),\bm\theta))w(t)dt&=&0,\label{theta_1}\\
\int_0^1\left(D_{0,0,1}F(t, f_0(t),\bm\theta_0)\right)^T(f'_0(t)-F(t, f_0(t),\bm\theta_0))w(t)dt&=&0.\label{theta0}
\end{eqnarray}
The previous two equations lead to
\begin{eqnarray}
\lefteqn{\int_0^1((D_{0,0,1}F(t, f(t),\bm\theta))^T(f'_0(t)-F(t, f_0(t),\bm\theta_0))}\nonumber\\
&&-(D_{0,0,1}F(t, f(t),\bm\theta_0))^T(f'_0(t)-F(t, f_0(t),\bm\theta_0)))w(t)dt\nonumber\\
&&+\int_0^1(D_{0,0,1}F(t, f(t),\bm\theta_0)-D_{0,0,1}F(t, f_0(t),\bm\theta_0))^T(f'_0(t)-F(t, f_0(t),\bm\theta_0))w(t)dt\nonumber\\
&&+\int_0^1\left(D_{0,0,1}F(t, f(t),\bm\theta)\right)^T(f'(t)-f'_0(t)+F(t, f_0(t),\bm\theta_0)-F(t, f(t),\bm\theta_0)\nonumber\\
&&+F(t, f(t),\bm\theta_0)-F(t, f(t),\bm\theta))w(t)dt=0.\nonumber
\end{eqnarray}
Replacing the difference between the values of a function at two different values of an argument by the integral of the corresponding partial derivative, we finally get
\begin{eqnarray}
\lefteqn{\left(\int_0^1\left(D_{0,0,1}F(t, f(t),\bm\theta)\right)^T\left\{\int_0^1D_{0,0,1}F(t, f(t),\bm\theta+\lambda(\bm\theta_0-\bm\theta))d\lambda\right\} w(t)dt\right.}\nonumber\\
&&\left.-\int_0^1\left\{\int_0^1\left(D_{0,0,1}\bm S(t, f(t),\bm\theta_0+\lambda(\bm\theta-\bm\theta_0))\right)^Td\lambda\right\}w(t)dt\right)(\bm\theta-\bm\theta_0)\nonumber\\
&&=\int_0^1\left(D_{0,0,1}F(t, f(t),\bm\theta_0)-D_{0,0,1}F(t, f_0(t),\bm\theta_0)\right)^T(f'_0(t)-F(t, f_0(t),\bm\theta_0))w(t)dt\nonumber\\
&&+\int_0^1\left(D_{0,0,1}F(t, f(t),\bm\theta)\right)^T(f'(t)-f'_0(t)+F(t, f_0(t),\bm\theta_0)-F(t, f(t),\bm\theta_0))w(t)dt.\nonumber
\end{eqnarray}
Let us denote the matrix pre-multiplied to $\bm\theta-\bm\theta_0$ in the previous equation by $\bm M(f,\bm\theta)$. In particular, $\bm M(f_0,\bm\theta_0)=\bm J_{\bm\theta_0}$. We also define $E_n=\{\sup_{t\in[0,1]}\|f(t)-f_{0}(t)\|\leq\epsilon_n, \|\bm\theta-\bm\theta_0\|\leq\epsilon_n\}$, where $\epsilon_n\rightarrow0$. By Lemmas 2 and 3, there exists such a sequence $\{\epsilon_n\}$ so that $\Pi(E^c_n|\bm Y)=o_{P_0}(1)$. Then, $\bm M(f,\bm\theta)$ is invertible and the eigenvalues of $[\bm M(f,\bm\theta)]^{-1}$ are bounded away from $0$ and $\infty$ for sufficiently large $n$. Within $E_n$, we can represent $\sqrt{n}(\bm\theta-\bm\theta_0)$ in the posterior as
\begin{eqnarray}
(\bm M(f,\bm\theta))^{-1}\sqrt{n}(T_{1n}+T_{2n}+T_{3n}),\nonumber
\end{eqnarray}
where
\begin{eqnarray}
T_{1n}&=&\int_0^1\left(D_{0,0,1}F(t, f(t),\bm\theta_0)-D_{0,0,1}F(t, f_0(t),\bm\theta_0)\right)^T(f'_0(t)-F(t, f_0(t),\bm\theta_0))w(t)dt,\nonumber\\
T_{2n}&=&\int_0^1\left(D_{0,0,1}F(t, f(t),\bm\theta)\right)^T(f'(t)-f'_0(t))w(t)dt,\nonumber\\
T_{3n}&=&\int_0^1\left(D_{0,0,1}F(t, f(t),\bm\theta)\right)^T(F(t, f_0(t),\bm\theta_0)-F(t, f(t),\bm\theta_0))w(t)dt.\nonumber
\end{eqnarray}
It should be noted that $\|(\bm M(f,\bm\theta))^{-1}-\bm J^{-1}_{\bm\theta_0}\|=o(1)$ for $(\bm\theta, f)\in E_n$. We can assert that inside the set $E_n$, the asymptotic behavior of the posterior distribution of $\sqrt{n}(\bm\theta-\bm\theta_0)$ is given by that of
\begin{eqnarray}
{{\bm J^{-1}_{\bm\theta_0}}
\sqrt{n}(T_{1n}+T_{2n}+T_{3n})}.\label{asexpr}
\end{eqnarray}
We shall extract $\sqrt{n}\bm J^{-1}_{\bm\theta_0}\left(\bm\Gamma(f)-\Gamma(f_0)\right)$ from \eqref{asexpr} and show that the norm of the remainder is bounded up to a constant by the quantity on the right hand side of \eqref{thm1}. First let us deal with the second term inside the bracket above. We note that
\begin{eqnarray}
T_{2n}&=&\int_0^1\left(\bm D_{0,0,1}F(t, f_0(t),\bm\theta_0)\right)^T(f'(t)-f'_0(t))w(t)dt\nonumber\\
&&\,\,\,\,\,\,\,+\int_0^1\left(\bm D_{0,0,1}F(t, f(t),\bm\theta)-D_{0,0,1}F(t, f_0(t),\bm\theta_0)\right)^T(f'(t)-f'_0(t))w(t)dt\nonumber\\
&&=-\int_0^1\left(\frac{d}{dt}[(D_{0,0,1}F(t, f_0(t),\bm\theta_0))^Tw(t)]\right)(f(t)-f_0(t))dt\nonumber\\
&&\,\,\,\,\,\,\,+\int_0^1\left(\bm D_{0,0,1}F(t, f(t),\bm\theta)-D_{0,0,1}F(t, f_0(t),\bm\theta_0)\right)^T(f'(t)-f'_0(t))w(t)dt,\nonumber\\\label{exp2_1}
\end{eqnarray}
where the last equality follows by integration by parts and the fact that $w(0)=w(1)=0$. The first term on the right hand side of \eqref{exp2_1} appears in $\bm\Gamma(f)-\bm\Gamma(f_0)$. Now we consider the other term on the right-hand side of \eqref{exp2_1}.
By the Cauchy-Schwarz inequality,
\begin{eqnarray}
\lefteqn{\left\|\int_0^1(D_{0,0,1}F(t, f(t),\bm\theta)-D_{0,0,1}F(t, f_0(t),\bm\theta_0))^T(f'(t)-f'_0(t))w(t)dt\right\|}\nonumber\\
&&\leq\left\|D_{0,0,1}F(\cdot, f(\cdot), \bm\theta)-D_{0,0,1}F(\cdot, f_0(\cdot), \bm\theta_0)\right\|_w\left\|f'(\cdot)- f'_0(\cdot)\right\|_w.\nonumber\\\label{exp2_2}
\end{eqnarray}
Now by the continuity of $D_{0,1,1}F(t, y,\bm\theta)$ and $D_{0,0,2}F(t, y,\bm\theta)$, we have
\begin{eqnarray}
{\left\|D_{0,0,1}F(\cdot, f(\cdot), \bm\theta)-D_{0,0,1}F(\cdot, f_0(\cdot), \bm\theta_0)\right\|_w}
&\lesssim&\sup_{t\in[0,1]}\|f(t)-f_{0}(t)\|+\|\bm\theta-\bm\theta_0\|.\nonumber\\\label{exp2_3}
\end{eqnarray}
From \eqref{asexpr}, inside $E_n$, $\|\bm\theta-\bm\theta_0\|$ can be bounded above up to a constant by $\|T_{1n}\|+\|T_{2n}\|+\|T_{3n}\|$.
Applying the Cauchy-Schwarz inequality and utilizing the continuity of $D_{0,1,1}F(t, y,\bm\theta)$, we get
\begin{eqnarray}
\|T_{1n}\|+\|T_{2n}\|+\|T_{3n}\|&\lesssim&\sup_{t\in[0,1]}\|f'(t)-f'_{0}(t)\|\nonumber\\
&&+\sup_{t\in[0,1]}\|f(t)-f_{0}(t)\|.\label{exp2_4}
\end{eqnarray}
Combining \eqref{exp2_2}--\eqref{exp2_4}, we get
\begin{eqnarray}
\lefteqn{\left\|\int_0^1(D_{0,0,1}F(t, f(t), \bm\theta)-D_{0,0,1}F(t, f_0(t), \bm\theta_0))^T(f'(t)-f'_0(t))w(t)dt\right\|}\nonumber\\
&\lesssim&\sup_{t\in[0,1]}\|f(t)-f_{0}(t)\|^2
+\sup_{t\in[0,1]}\|f'(t)-f'_{0}(t)\|^2.\label{exp2_8}
\end{eqnarray}
Now we consider $T_{3n}$ in \eqref{asexpr}. Then,
\begin{eqnarray}
T_{3n}&=&\int_0^1\left(D_{0,0,1}F(t, f_0(t),\bm\theta_0)\right)^T(F(t, f_0(t),\bm\theta_0)-F(t, f(t),\bm\theta_0))w(t)dt\nonumber\\
&&+\int_0^1\left(D_{0,0,1}F(t, f(t),\bm\theta)-D_{0,0,1}F(t, f_0(t),\bm\theta_0)\right)^T\nonumber\\
&&\times(F(t, f_0(t),\bm\theta_0)-F(t, f(t),\bm\theta_0))w(t)dt.\label{exp3_1}
\end{eqnarray}
The first term on the right hand side of \eqref{exp3_1} can be written as
\begin{eqnarray}
&-&\int_0^1\left(D_{0,0,1}F(t, f_0(t),\bm\theta_0)\right)^TD_{0,1,0}F(t, f_0(t),\bm\theta_0)(f(t)-f_0(t))w(t)dt\nonumber\\
&&-\int_0^1\left(D_{0,0,1}F(t, f_0(t),\bm\theta_0)\right)^T\nonumber\\
&&\times\left\{\int_0^1[D_{0,1,0}F(t, f_0(t)+\lambda(f-f_0)(t),\bm\theta_0)-D_{0,1,0}F(t, f_0(t),\bm\theta_0)]d\lambda\right\}\nonumber\\
&&\times(f(t)-f_0(t))w(t)dt\nonumber\\
&=&T_{31n}+T_{32n},\nonumber
\end{eqnarray}
say. Now $T_{31n}$ appears in $\bm\Gamma(f)-\bm\Gamma(f_0)$. By the continuity of $D_{0,2,0}F(t, y, \bm\theta)$, we obtain
\begin{eqnarray}
\|T_{32n}\|\lesssim\sup_{t\in[0,1]}\|f(t)-f_{0}(t)\|^2.\nonumber
\end{eqnarray}
Lastly, we treat the second term on the right hand side of \eqref{exp3_1}. By the Cauchy-Schwarz inequality, its norm can be bounded by
\begin{eqnarray}
{\left\|D_{0,0,1}F(\cdot, f(\cdot), \bm\theta)-D_{0,0,1}F(\cdot, f_0(\cdot), \bm\theta_0)\right\|_w}\left\|F(\cdot, f_0(\cdot), \bm\theta_0)-F(\cdot, f(\cdot), \bm\theta_0))\right\|_w\nonumber
\end{eqnarray}
The above expression can be shown to be bounded by a linear combination of $\sup\{\|f(t)-f_{0}(t)\|^2: t\in[0,1]\}$ and $\sup\{\|f'(t)-f'_{0}(t)\|^2: t\in[0,1]\}$ by the techniques used so far. As far as the first term inside the bracket of \eqref{asexpr} is concerned, we have
\begin{eqnarray}
T_{1n}&=&\int_0^1\left(D_{0,1,0}\bm S(t, f_0(t),\bm\theta_0)\right)(f(t)-f_0(t))w(t)dt\nonumber\\
&&+\int_0^1\left\{\int_0^1\left(D_{0,1,0}\bm S(t, f_0(t)+\lambda(f-f_0)(t),\bm\theta_0)-D_{0,1,0}\bm S(t, f_0(t),\bm\theta_0)\right)d\lambda\right\}\nonumber\\
&&\times(f(t)-f_0(t))w(t)dt\nonumber.
\end{eqnarray}
The first integral appears in $\bm\Gamma(f)-\bm\Gamma(f_0)$. The norm of the second integral of the above display can be bounded by a multiple of $\sup\{\|f(t)-f_{0}(t)\|^2: t\in[0,1]\}$ utilizing the continuity of $D_{0,2,1}F(t,y,\bm\theta)$ with respect to its arguments.
Combining these, we get \eqref{thm1}.
\end{proof}
Before proving Theorem 2, we need to study the asymptotic behaviors of the mean and variance of $\E(\bm G_{n,j}^T\bm\beta_j|\bm Y)$ for $j=1,\ldots,d$ given by the next lemma.
\begin{lemma}
Under the conditions of Theorem 2, the eigenvalues of $\bm\Var_0(\bm\E(\bm G_{n,j}^T\bm\beta_j|\bm Y))$ are of the order $n^{-1}$ and
\begin{eqnarray}
\max_{1\leq k\leq p}\left|[\bm\E_0(\bm\E(\bm G_{n,j}^T\bm\beta_j|\bm Y))]_k-\int_0^1 A_{k,j}(t)f_{j0}(t)dt\right|&=&o\left({n}^{-1/2}\right),\nonumber
\end{eqnarray}
where $A_{k,j}(t)$ denotes the $(k,j)^{th}$ element of the matrix $\bm A(t)$ as defined in Remark $2$ for $k=1,\ldots,p$, $j=1,\ldots,d$.
\end{lemma}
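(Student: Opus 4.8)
The plan is to work directly from the closed-form posterior mean in \eqref{posterior}. Since $\E(\bm\beta_j|\bm Y)=(1+\sigma^2 k_n/n)^{-1}(\bm X^T_n\bm X_n)^{-1}\bm X^T_n\bm Y_{,j}$, linearity gives $\E(\bm G^T_{n,j}\bm\beta_j|\bm Y)=(1+\sigma^2 k_n/n)^{-1}\bm G^T_{n,j}(\bm X^T_n\bm X_n)^{-1}\bm X^T_n\bm Y_{,j}$. Writing $\bm Y_{,j}=f_{j0}(\bm x)+\bm\varepsilon_{,j}$, the only randomness under $P_0$ sits in $\bm\varepsilon_{,j}$, so I would establish the variance claim and the bias claim separately.

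For the variance, I compute $\Var_0(\E(\bm G^T_{n,j}\bm\beta_j|\bm Y))=(1+\sigma^2 k_n/n)^{-2}\sigma_0^2\,\bm G^T_{n,j}(\bm X^T_n\bm X_n)^{-1}\bm G_{n,j}$, using $\Var_0(\bm\varepsilon_{,j})=\sigma_0^2\bm I_n$ and $(\bm X^T_n\bm X_n)^{-1}\bm X^T_n\bm X_n(\bm X^T_n\bm X_n)^{-1}=(\bm X^T_n\bm X_n)^{-1}$. The crux is to show the $p\times p$ matrix $\bm G^T_{n,j}(\bm X^T_n\bm X_n)^{-1}\bm G_{n,j}$ has eigenvalues of exact order $n^{-1}$. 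I would use the standard B-spline facts (Lemma 5.4 of \citet{zhou2000derivative}) that, under the design condition $\sup_t|Q_n(t)-t|=o(k_n^{-1})$, the matrix $\bm X^T_n\bm X_n$ is comparable to $n\int_0^1\bm N(t)\bm N^T(t)dt$, whose eigenvalues are of order $k_n^{-1}$. Identifying $n\,\bm G^T_{n,j}(\bm X^T_n\bm X_n)^{-1}\bm G_{n,j}$ with the Gram matrix of the $L^2$-projections of the entries $A_{k,j}(\cdot)$ onto the spline space spanned by $\{N_\ell\}$, and letting $n\to\infty$ so that these projections converge to $A_{k,j}$ in $L^2$, this matrix converges to $\bm B_j=((\langle A_{k,j},A_{k',j}\rangle))_{k,k'}$. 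Since $\bm B_j$ is nonsingular by assumption and $(1+\sigma^2 k_n/n)^{-2}\sigma_0^2$ is bounded away from $0$ and $\infty$, the eigenvalues of the variance are pinned between constant multiples of $n^{-1}$.

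For the bias, the $k$-th component of $\E_0(\E(\bm G^T_{n,j}\bm\beta_j|\bm Y))$ equals $(1+\sigma^2 k_n/n)^{-1}\int_0^1 A_{k,j}(t)\hat f_{\mathrm{LS}}(t)dt$, where $\hat f_{\mathrm{LS}}(t)=\bm N^T(t)(\bm X^T_n\bm X_n)^{-1}\bm X^T_n f_{j0}(\bm x)$ is the least-squares spline fit of $f_{j0}$. I would compare this to $\int_0^1 A_{k,j}(t)f_{j0}(t)dt$ by splitting the error into a shrinkage part and an approximation part. The shrinkage factor satisfies $|(1+\sigma^2 k_n/n)^{-1}-1|=O(k_n/n)$ while $\int A_{k,j}\hat f_{\mathrm{LS}}=O(1)$, contributing $O(k_n/n)$. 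For the approximation part $\int A_{k,j}(\hat f_{\mathrm{LS}}-f_{j0})$, I introduce the spline $\bm\beta^*$ of \eqref{spldis} with $\sup_t|f_{j0}(t)-\bm N^T(t)\bm\beta^*|=O(k_n^{-m})$ and write it as $\int A_{k,j}(\bm N^T\bm\beta^*-f_{j0})+(\int A_{k,j}\bm N^T)(\bm X^T_n\bm X_n)^{-1}\bm X^T_n(f_{j0}(\bm x)-\bm X_n\bm\beta^*)$. The first piece is $O(k_n^{-m})$; for the second I apply the Cauchy--Schwarz inequality exactly as in Lemma 1, bounding it by $[(\int A_{k,j}\bm N^T)(\bm X^T_n\bm X_n)^{-1}\int A_{k,j}\bm N]^{1/2}\,\|f_{j0}(\bm x)-\bm X_n\bm\beta^*\|=O(n^{-1/2})\cdot O(\sqrt n\,k_n^{-m})=O(k_n^{-m})$, where the first factor is $O(n^{-1/2})$ by the variance computation above.

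Combining the two pieces gives a bias of order $O(k_n^{-m})+O(k_n/n)$ uniformly over $k=1,\ldots,p$, and I would finish by invoking the assumptions of Theorem 1: $\sqrt n\,k_n^{-m}=o(1)$ forces $k_n^{-m}=o(n^{-1/2})$, while $n^{1/2}k_n^{-4}\to\infty$ forces $k_n=o(n^{1/8})$ and hence $k_n/n=o(n^{-1/2})$, so both terms are $o(n^{-1/2})$. The step I expect to be the main obstacle is the approximation part of the bias: one must show that the least-squares fit $\hat f_{\mathrm{LS}}$, and not merely the best approximant $\bm N^T\bm\beta^*$, integrates against $A_{k,j}$ with error $O(k_n^{-m})$, which succeeds only by pairing the $O(n^{-1/2})$ size of the ``variance-type'' quadratic form against the $O(\sqrt n\,k_n^{-m})$ size of the raw residual rather than bounding the two factors separately.
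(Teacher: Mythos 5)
Your proposal is correct and follows essentially the same route as the paper: the same closed-form posterior mean, the same eigenvalue comparison of $\bm G_{n,j}^T(\bm X_n^T\bm X_n)^{-1}\bm G_{n,j}$ against $n^{-1}\bm B_j$ via spline approximation of $A_{k,j}$ (the paper uses an explicit interpolant $\tilde A_{k,j}$ with a four-term decomposition where you invoke convergence of the $L^2$-projection Gram matrix, but the substance is identical), and the same three-term bias split from Lemma 1 with the Cauchy--Schwarz pairing of the $O(n^{-1/2})$ quadratic form against the $O(\sqrt n\,k_n^{-m})$ residual. The paper itself disposes of the bias by saying ``proceeding in the same way as in the proof of Lemma 1,'' which is exactly what you carried out.
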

\begin{proof}
Let us fix $j\in\{1,\ldots,d\}$. We note that
\begin{eqnarray}
\bm\E(\bm G_{n,j}^T\bm\beta_j|\bm Y)={\left(1+\frac{k_n\sigma^2}{n}\right)}^{-1}\bm G_{n,j}^T{({\bm X^T_n\bm X_n})}^{-1}\bm X^T_n\bm Y_{,j}.\nonumber
\end{eqnarray}
Hence,
\begin{eqnarray}
\bm{\Var}_0(\bm\E(\bm G_{n,j}^T\bm\beta_j|\bm Y))=\sigma_0^2{\left(1+\frac{\sigma^2k_n}{n}\right)}^{-2}\bm G_{n,j}^T{({\bm X^T_n\bm X_n})}^{-1}\bm G_{n,j}\nonumber.
\end{eqnarray}
If $A_{k,j}(\cdot)\in C^{m^*}\left((0,1)\right)$ for some $1\leq m^*<m$, then by equation $(2)$ of \citet[page 167]{de1978practical}, we have $\sup\{|A_{k,j}(t)-\tilde A_{k,j}(t)|:t\in[0,1]\}=O(k_n^{-1})$, where $\tilde A_{k,j}(\cdot)=\bm \alpha_{k,j}^T\bm N(\cdot)$ and $\bm\alpha^T_{k,j}=(A_{k,j}(t^*_1),\ldots,A_{k,j}(t^*_{k_n+m-1}))$ with appropriately chosen $t^*_1,\ldots,t^*_{k_n+m-1}$.
We can write
\begin{eqnarray}
\bm G^T_{n,j}(\bm X^T_n\bm X_n)^{-1}\bm G_{n,j}&=&(\bm G_{n,j}-\tilde{\bm G}_{n,j})^T(\bm X^T_n\bm X_n)^{-1}(\bm G_{n,j}-\tilde{\bm G}_{n,j})\nonumber\\
&&+\tilde{\bm G}^T_{n,j}(\bm X^T_n\bm X_n)^{-1}\tilde{\bm G}_{n,j}+(\bm G_{n,j}-\tilde{\bm G}_{n,j})^T(\bm X^T_n\bm X_n)^{-1}\tilde{\bm G}_{n,j}\nonumber\\
&&+\tilde{\bm G}^T_{n,j}(\bm X^T_n\bm X_n)^{-1}(\bm G_{n,j}-\tilde{\bm G}_{n,j}),\nonumber
\end{eqnarray}
where $[\tilde{\bm G}_{n,j}]_{k,}=\int_0^1\tilde{A_{k,j}}(t)(\bm N(t))^Tdt$ for $k=1,\ldots,p.$ Let us denote $\tilde{\bm A}=(\!(\tilde A_{k,j})\!)_{1\leq k,j\leq p}$. We study the asymptotic orders of the eigenvalues of the matrices $\tilde{\bm G}^T_{n,j}(\bm X^T_n\bm X_n)^{-1}\tilde{\bm G}_{n,j}$ and $(\bm G_{n,j}-\tilde{\bm G}_{n,j})^T(\bm X^T_n\bm X_n)^{-1}(\bm G_{n,j}-\tilde{\bm G}_{n,j})$. It should be noted that
\begin{eqnarray}
\bm\alpha^T_{k,j}\int_0^1\bm N(t)\bm N^T(t)dt\,\bm\alpha_{k,j}&=&\int_0^1\tilde A^2_{k,j}(t)dt\nonumber\\
&\asymp&\|\bm\alpha_{k,j}\|^2k^{-1}_n,\nonumber
\end{eqnarray}
the last step following from the proof of Lemma 6.1 of \citet{zhou1998local}. Thus, the eigenvalues of $\int_0^1\bm N(t)(\bm N(t))^Tdt$ are of order $k^{-1}_n$. We introduce the notation $\lambda(\bm C)$ to denote an arbitrary eigenvalue of the matrix $\bm C$. Since the eigenvalues of $\left(\bm X^T_n\bm X_n/n\right)$ are of the order $k_n^{-1}$ \citep{zhou1998local}, we have
\begin{eqnarray}
\lambda\left(\tilde{\bm G}^T_{n,j}(\bm X^T_n\bm X_n)^{-1}\tilde{\bm G}_{n,j}\right)&\asymp&\frac{k_n}{n}\lambda\left(\tilde{\bm G}^T_{n,j}\tilde{\bm G}_{n,j}\right)\nonumber\\
&=&\frac{k_n}{n}\lambda\left(\int_0^1\tilde{\bm A_{,j}}(t)\bm N^T(t)dt\int_0^1\bm N(t)(\tilde{\bm A_{,j}}(t))^Tdt\right)\nonumber\\
&=&\frac{k_n}{n}\lambda\left(\left({\begin{array}{c}\bm\alpha_{1,j}^T\\\vdots\\\bm \alpha_{p,j}^T\end{array}}\right)\left(\int_0^1\bm N(t)\bm N^T(t)dt\right)^2\left(\bm\alpha_{1,j}\cdots\bm\alpha_{p,j}\right)\right)\nonumber\\
&\asymp&\frac{k_n}{nk_n^2}\lambda\left(\left({\begin{array}{ccc}\bm\alpha_{1,j}^T\bm\alpha_{1,j}&\cdots&\bm\alpha_{1,j}^T\bm\alpha_{p,j}\\\vdots&\ddots&\vdots\\
\bm\alpha_{1,j}^T\bm\alpha_{p,j}&\cdots&\bm\alpha_{p,j}^T\bm\alpha_{p,j}\end{array}}\right)\right)\nonumber\\
&\asymp&\frac{1}{n}\lambda\left(\left({\begin{array}{ccc}\left<A_{1,j}(\cdot),A_{1,j}(\cdot)\right>&\cdots&\left<A_{1,j}(\cdot),A_{p,j}(\cdot)\right>\\\vdots&\ddots&\vdots\\
\left<A_{1,j}(\cdot),A_{p,j}(\cdot)\right>&\cdots&\left<A_{p,j}(\cdot),A_{p,j}(\cdot)\right>\end{array}}\right)\right)\nonumber\\
&=&\frac{1}{n}\lambda\left(\bm B_j\right)\asymp\frac{1}{n}.\nonumber
\end{eqnarray}
Let us denote by $\bm 1_{k_n+m-1}$ the $k_n+m-1$-component vector with all elements $1$. Then for $k=1,\ldots,p$,
\begin{eqnarray}
\lefteqn{\left[(\bm G_{n,j}-\tilde{\bm G}_{n,j})^T(\bm X^T_n\bm X_n)^{-1}(\bm G_{n,j}-\tilde{\bm G}_{n,j})\right]_{k,k}}\nonumber\\
&\asymp&\int_0^1(A_{k,j}(t)-\tilde A_{k,j}(t))(\bm N(t))^Tdt\,{({\bm X^T_n\bm X_n})}^{-1}\int_0^1(A_{k,j}(t)-\tilde A_{k,j}(t))(\bm N(t))dt\nonumber\\
&=&\frac{1}{n}\int_0^1(A_{k,j}(t)-\tilde A_{k,j}(t))(\bm N(t))^Tdt\,{\left({\bm X^T_n\bm X_n}/{n}\right)}^{-1}\int_0^1(A_{k,j}(t)-\tilde A_{kj}(t))\bm N(t)dt\nonumber\\
&\asymp&\frac{k_n}{n}\int_0^1(A_{k,j}(t)-\tilde A_{k,j}(t))(\bm N(t))^Tdt\int_0^1(A_{k,j}(t)-\tilde A_{k,j}(t))\bm N(t)dt\nonumber\\
&\lesssim&\frac{k_n}{nk_n^2}\int_0^1\bm 1^T_{k_n+m-1}\bm N(t)\bm N^T(t)dt\int_0^1\bm N(t)\bm N^T(t)\bm 1_{k_n+m-1}dt\nonumber\\
&=&\frac{1}{nk_n}\bm 1^T_{k_n+m-1}\left(\int_0^1\bm N(t)\bm N^T(t)dt\right)^2\bm 1_{k_n+m-1}\nonumber\\
&\asymp&\frac{1}{nk_n}\bm 1^T_{k_n+m-1}\bm 1_{k_n+m-1}k_n^{-2}\asymp\frac{1}{nk_n^2}.\nonumber
\end{eqnarray}
Thus, the eigenvalues of $(\bm G_{n,j}-\tilde{\bm G}_{n,j})^T(\bm X^T_n\bm X_n)^{-1}(\bm G_{n,j}-\tilde{\bm G}_{n,j})$ are of the order $(nk_n^2)^{-1}$ or less. Hence, the eigenvalues of $\bm G^T_{n,j}(\bm X^T_n\bm X_n)^{-1}\bm G_{n,j}$ are of the order $n^{-1}$.\\
Similar to the proof of Lemma 1, we can write for the $\bm\beta^*_j$ given in \eqref{spldis},
\begin{eqnarray}
\lefteqn{\sqrt{n}\left|[\bm\E_0(\bm\E(\bm G_{n,j}^T\bm\beta_j|\bm{Y}))]_k-\int_0^1 A_{k,j}(t)f_{j0}(t)dt\right|}\nonumber\\
&\leq&\sqrt{n}\left|{\left(1+\frac{k_n\sigma^2}{n}\right)}^{-1}[\bm G_{n,j}^T\bm\beta^*_j]_k-[\bm G_{n,j}^T\bm\beta^*_j]_k\right|\nonumber\\
&&+\sqrt{n}{\left(1+\frac{k_n\sigma^2}{n}\right)}^{-1}\left|[\bm G_{n,j}^T{(\bm X^T_n\bm X_n)}^{-1}\bm X^T_n(f_{j0}(\bm{x})-\bm X_n\bm\beta^*_j)]_k\right|\nonumber\\
&&+\sqrt{n}\left|\int_0^1 A_{k,j}(t)f_{j0}(t)dt-[\bm G_{n,j}^T\bm\beta^*_j]_k\right|,\nonumber
\end{eqnarray}
where $[\bm G_{n,j}^T\bm\beta^*_j]_k=\int_0^1A_{k,j}(t)f^*_j(t)dt$ and $f^*_j(t)=\bm N^T(t)\bm\beta^*_j$ for $k=1,\ldots,p$. Proceeding in the same way as in the proof of Lemma 1, we can show that each term on the right hand side of the above equation converges to zero.\end{proof}
Since the bias and standard deviation of $[\bm\E(\bm G^T_{n,j}\bm\beta|\bm Y)]_k$ are of order $n^{-1/2}$ as an estimator of $\int_0^1A_{k,j}(t)f_{j0}(t)dt$, we can assert that for $j=1,\ldots,d$, and $k=1,\ldots,p$,
\begin{eqnarray}
\left|[\bm\E(\bm G^T_{n,j}\bm\beta|\bm Y)]_k-\int_0^1A_{k,j}(t)f_{j0}(t)dt\right|=O_{P_0}(n^{-1/2}).\label{tight}
\end{eqnarray}
\begin{proof}[Proof of Theorem 2]
We observe that for $j=1,\ldots,d$,
\begin{eqnarray}
\frac{\|f_{j0}(\bm x)\|^2}{n^2k_n^{-2}}=\frac{k_n^2}{n}\frac{\|f_{j0}(\bm x)\|^2}{n}
=O\left(\frac{k_n^2}{n}\right)=o(1),\nonumber
\end{eqnarray}
and ${(k_n+m-1)k_n^2}/{n^2}\asymp k_n^3/n^2=o(1)$. Now proceeding in the same way as in the proofs of Theorem 1 and Corollary 1 of \citet{bontemps2011bernstein}, we get for $j\neq j';\,j,j'=1,\ldots,d$,
\begin{eqnarray}
\left\|\Pi(\bm G_{n,j}^T\bm\beta_j\in\cdot|\bm Y)-{N}(\bm G_{n,j}^T(\bm X^T_n\bm X_n)^{-1}\bm X^T_n\bm Y_{,j},\sigma^2\bm G_{n,j}^T(\bm X^T_n\bm X_n)^{-1}\bm G_{n,j})\right\|_{TV}=o_{P_0}(1).\nonumber
\end{eqnarray}
Since the posterior distributions of $\bm\beta_j$ and $\bm\beta_j'$ are mutually independent for $j\neq j';\,j,j'=1,\ldots,d$,
\begin{eqnarray}
\left\|\Pi\left(\sum_{j=1}^d\bm G_{n,j}^T\bm\beta_j\in\cdot|\bm Y\right)-{N}\left(\sum_{j=1}^d\bm G_{n,j}^T(\bm X^T_n\bm X_n)^{-1}\bm X^T_n\bm Y_{,j},\sigma^2\sum_{j=1}^d\bm G_{n,j}^T(\bm X^T_n\bm X_n)^{-1}\bm G_{n,j}\right)\right\|_{TV}\nonumber\\
=o_{P_0}(1),\nonumber
\end{eqnarray}
and hence
\begin{eqnarray}
\left\|\Pi\left(\sqrt{n}\sum_{j=1}^d\bm G_{n,j}^T\bm\beta_j-\sqrt{n}\bm J^{-1}_{\bm\theta_0}\bm\Gamma(\bm f_0)\in\cdot|\bm Y\right)-{N}(\bm\mu_n,\sigma^2\bm\Sigma_n)\right\|_{TV}
=o_{P_0}(1).\nonumber\\\label{thm2_1}
\end{eqnarray}
By \eqref{thm1} and \eqref{linearize}, we may write for $(\bm\theta, \bm f)\in E_n$,
\begin{eqnarray}
\sqrt{n}(\bm\theta-\bm\theta_0)=\sqrt{n}\left(\sum_{j=1}^d\bm G_{n,j}^T\bm\beta_j-\bm J^{-1}_{\bm\theta_0}\bm\Gamma(\bm f_0)\right)+\sqrt{n}\,{\bm \eta}_n,\nonumber
\end{eqnarray}
where $\|\bm\eta_n\|\lesssim\sup\{\|\bm f(t)-\bm f_0(t)\|^2: t\in[0,1]\}+\sup\{\|\bm f'(t)-\bm f'_0(t)\|^2: t\in[0,1]\}$. By Lemma 2, for all $\epsilon>0$, $\Pi(\sqrt{n}\|{\bm\eta}_n\|>\epsilon|\bm Y)=o_{P_0}(1)$. We fix $E^*_n=\{\sqrt{n}\|\bm\eta_n\|\leq\delta_n\}$. There exists a $\delta_n\rightarrow0$ such that $\Pi(E_n\cap E^*_n|\bm Y)\stackrel{P_0}\rightarrow 1$. Now for any $A\in\mathscr{R}^p$,
\begin{eqnarray}
\Pi(\{\sqrt{n}(\bm\theta-\bm\theta_0)\in A\}\cap E_n\cap E^*_n|\bm Y)&\leq&\Pi(\sqrt{n}(\bm\theta-\bm\theta_0)\in A|\bm Y)\nonumber\\
&\leq&\Pi(\{\sqrt{n}(\bm\theta-\bm\theta_0)\in A\}\cap E_n\cap E^*_n|\bm Y)\nonumber\\
&&+\Pi((E_n\cap E^*_n)^c|\bm Y).\label{thm2_2}
\end{eqnarray}
Again using \eqref{linearize} and the triangle inequality,
\begin{eqnarray}
\lefteqn{\sup_{A\in\mathscr{R}^p}\left|\Pi(\{\sqrt{n}(\bm\theta-\bm\theta_0)\in A\}\cap E_n\cap E^*_n|\bm Y)-P(N_p(\bm\mu_n,\sigma^2\bm\Sigma_n)\in A)\right|}\nonumber\\
&\leq&\sup_{\|\bm u\|\leq\delta_n}\sup_{A\in\mathscr{R}^p}\left|\Pi\left(\sqrt{n}\left(\sum_{j=1}^d\bm G_{n,j}^T\bm\beta_j-\bm J^{-1}_{\bm\theta_0}\bm\Gamma(\bm f_0)\right)\in A-\bm u|\bm Y\right)\right.\nonumber\\
&&\left.-P(N_p(\bm\mu_n,\sigma^2\bm\Sigma_n)\in A-\bm u)\right|\nonumber\\
&&+\sup_{\|\bm u\|\leq\delta_n}\sup_{A\in\mathscr{R}^p}\left|P(N_p(\bm\mu_n,\sigma^2\bm\Sigma_n)\in A-\bm u)-P(N_p(\bm\mu_n,\sigma^2\bm\Sigma_n)\in A)\right|\nonumber\\
&\leq&\sup_{B\in\mathscr{R}^p}\left|\Pi\left(\sqrt{n}\left(\sum_{j=1}^d\bm G_{n,j}^T\bm\beta_j-\bm J^{-1}_{\bm\theta_0}\bm\Gamma(\bm f_0)\right)\in B|\bm Y\right)-P(N_p(\bm\mu_n,\sigma^2\bm\Sigma_n)\in B)\right|\nonumber\\
&&+\frac{1}{\sqrt{2\pi}}\delta_n.\nonumber\\\label{thm2_3}
\end{eqnarray}
In the last step we used the fact that the eigenvalues of $\bm\Sigma_n$ are bounded away from zero and infinity as obtained from Lemma 4. The first term of the right hand side of \eqref{thm2_3} converges in probability to zero by \eqref{thm2_1}. Combining this with \eqref{thm2_2}, we get the desired result.
\end{proof}
\begin{proof}[Proof of Theorem 3]
According to the fitted model, $\bm Y_{i,}^{1\times d}\sim N_d((\bm X_n)_{i,}\bm B_n,\Sigma^{d\times d})$ for $i=1,\ldots,n$. The logarithm of the posterior probability density function (p.d.f.) is proportional to
\begin{eqnarray}
\sum_{i=1}^n\left((\bm X_n)_{i,}(\bm\beta_1\ldots\bm\beta_d)-\bm Y_{i,}\right)\bm\Sigma^{-1}\left(\left({\begin{array}{c}\bm \beta_1^T\\\vdots\\\bm \beta_d^T\end{array}}\right)(\bm X^T_n)_{,i}-\bm Y^T_{i,}\right)\nonumber\\
+\sum_{j=1}^d\bm\beta_j^T\frac{\bm X^T_n\bm X_n}{nk_n^{-1}}\bm\beta_j.\label{logpost}
\end{eqnarray}
First let us consider the part of the above expression quadratic in $\bm\beta_j$, $j=1,\ldots,d$. We have,
\begin{eqnarray}
\lefteqn{\sum_{i=1}^n(\bm X_n)_{i,}(\bm\beta_1\ldots\bm\beta_d)\bm\Sigma^{-1}\left({\begin{array}{c}\bm \beta_1^T\\\vdots\\\bm \beta_d^T\end{array}}\right)(\bm X^T_n)_{,i}}\nonumber\\
&=&\mathrm{tr}\left(\left({\begin{array}{c}(\bm X_n)_{1,}\\\vdots\\\bm (\bm X_n)_{n,}\end{array}}\right)(\bm\beta_1\ldots\bm\beta_d)\bm\Sigma^{-1}\left({\begin{array}{c}\bm \beta_1^T\\\vdots\\\bm \beta_d^T\end{array}}\right)\left((\bm X^T_n)_{,1}\ldots(\bm X^T_n)_{,n}\right)\right)\nonumber\\
&=&\mathrm{tr}\left(\bm\Sigma^{-1}\left({\begin{array}{c}\bm \beta_1^T\\\vdots\\\bm \beta_d^T\end{array}}\right)\bm X^T_n\bm X_n(\bm\beta_1\ldots\bm\beta_d)\right).\nonumber\\\label{thm3_1}
\end{eqnarray}
Again,
\begin{equation}
\sum_{j=1}^d\bm\beta_j^T\frac{\bm X^T_n\bm X_n}{nk_n^{-1}}\bm\beta_j=\mathrm{tr}\left(\left({\begin{array}{c}\bm \beta_1^T\\\vdots\\\bm \beta_d^T\end{array}}\right)\frac{\bm X^T_n\bm X_n}{nk_n^{-1}}(\bm\beta_1\ldots\bm\beta_d)\right).\label{thm3_2}
\end{equation}
Adding \eqref{thm3_1} and \eqref{thm3_2}, the part of \eqref{logpost} quadratic in $\bm\beta_j$, $j=1,\ldots,d$ becomes
\begin{equation}
\mathrm{tr}\left(\left(\bm\Sigma^{-1}+\frac{k_n\bm I_d}{n}\right)\left({\begin{array}{c}\bm \beta_1^T\\\vdots\\\bm \beta_d^T\end{array}}\right)\bm X^T_n\bm X_n(\bm\beta_1\ldots\bm\beta_d)\right).\label{thm3_3}
\end{equation}
Now we deal with the part of \eqref{logpost} linear in $\bm\beta_j$, $j=1,\ldots,d$. Then,
\begin{eqnarray}
\sum_{i=1}^n(\bm X_n)_{i,}(\bm\beta_1\ldots\bm\beta_d)\bm\Sigma^{-1}\bm Y^T_{i,}
&=&\mathrm{tr}\left(\left({\begin{array}{c}(\bm X_n)_{1,}\\\vdots\\\bm (\bm X_n)_{n,}\end{array}}\right)(\bm\beta_1\ldots\bm\beta_d)\bm\Sigma^{-1}(\bm Y^T_{1,}\ldots\bm Y^T_{n,})\right)\nonumber\\
&=&\mathrm{tr}\left(\bm\Sigma^{-1}(\bm Y^T_{1,}\ldots\bm Y^T_{n,})\left({\begin{array}{c}(\bm X_n)_{1,}\\\vdots\\\bm (\bm X_n)_{n,}\end{array}}\right)(\bm\beta_1\ldots\bm\beta_d)\right).\label{thm3_4}
\end{eqnarray}
We can rewrite \eqref{thm3_4} as
\begin{equation}
\mathrm{tr}\left(\left(\bm\Sigma^{-1}+\frac{k_n\bm I_d}{n}\right)\left(\bm\Sigma^{-1}+\frac{k_n\bm I_d}{n}\right)^{-1}\bm\Sigma^{-1}(\bm Y^T_{1,}\ldots\bm Y^T_{n,})\bm X_n(\bm X^T_n\bm X_n)^{-1}\bm X^T_n\bm X_n\bm B_n\right).\nonumber
\end{equation}
Then the logarithm of the posterior p.d.f. is proportional to
\begin{eqnarray}
&&\mathrm{tr}\left(\left(\bm\Sigma^{-1}+\frac{k_n\bm I_d}{n}\right)\bm B^T_n\bm X^T_n\bm X_n\bm B_n\right)\nonumber\\
&&-2\,\mathrm{tr}\left(\left(\bm\Sigma^{-1}+\frac{k_n\bm I_d}{n}\right)\left(\bm\Sigma^{-1}+\frac{k_n\bm I_d}{n}\right)^{-1}\bm\Sigma^{-1}(\bm Y^T_{1,}\ldots\bm Y^T_{n,})\bm X_n(\bm X^T_n\bm X_n)^{-1}\bm X^T_n\bm X_n\bm B_n\right).\nonumber
\end{eqnarray}
Normalizing, the posterior density turns out to be proportional to
\begin{eqnarray}
\mathrm{exp}\left\{-\frac{1}{2}\mathrm{tr}\left[\left(\bm\Sigma^{-1}+\frac{k_n\bm I_d}{n}\right)\left(\bm B_n-(\bm X^T_n\bm X_n)^{-1}\bm X^T_n(\bm Y^T_{1,}\ldots\bm Y^T_{n,})^T\bm\Sigma^{-1}\left(\bm\Sigma^{-1}+\frac{k_n\bm I_d}{n}\right)^{-1}\right)^T\right.\right.\nonumber\\
\left.\left.\bm X^T_n\bm X_n\left(\bm B_n-(\bm X^T_n\bm X_n)^{-1}\bm X^T_n(\bm Y^T_{1,}\ldots\bm Y^T_{n,})^T\bm\Sigma^{-1}\left(\bm\Sigma^{-1}+\frac{k_n\bm I_d}{n}\right)^{-1}\right)\right]\right\},\nonumber
\end{eqnarray}
and hence the posterior distribution is a matrix normal distribution. Then, the posterior distribution of $\mathrm{vec}(\bm B_n)$ is a $(k_n+m-1)d$-dimensional normal distribution with mean vector $\mathrm{vec}\left((\bm X^T_n\bm X_n)^{-1}\bm X^T_n(\bm Y^T_{1,}\ldots\bm Y^T_{n,})^T\bm\Sigma^{-1}\left(\bm\Sigma^{-1}+\frac{k_n\bm I_d}{n}\right)^{-1}\right)$ and dispersion matrix $\left(\bm\Sigma^{-1}+\frac{k_n\bm I_d}{n}\right)^{-1}\otimes(\bm X^T_n\bm X_n)^{-1}$. Fixing a $j\in\{1,\ldots,d\}$, we observe that the posterior mean of $\bm\beta_j$ is a weighted sum of $(\bm X^T_n\bm X_n)^{-1}\bm X^T_n\bm Y_{,j'}$ for $j'=1,\ldots,d$. The weight attached with $(\bm X^T_n\bm X_n)^{-1}\bm X^T_n\bm Y_{,j}$ is of the order of $1$, whereas for $j'\neq j$, the contribution from $(\bm X^T_n\bm X_n)^{-1}\bm X^T_n\bm Y_{,j'}$ is of the order of $k_n/n$ which goes to zero as n goes to infinity. Thus, the results of Lemmas 1, 2 and 3 can be shown to hold under this setup.
We are interested in the limiting distribution of
\begin{eqnarray}
\bm J^{-1}_{\bm\theta_0}\bm\Gamma(\bm f)=\sum_{j=1}^d\bm G_{n,j}^T\bm\beta_j=(\bm G_{n,1}^T\ldots\bm G_{n,d}^T)\mathrm{vec}(\bm B_n).\nonumber
\end{eqnarray}
We note that the posterior distribution of $\left(\left(\bm\Sigma^{-1}+\frac{k_n\bm I_d}{n}\right)^{1/2}\otimes\bm I_{k_n+m-1}\right)\mathrm{vec}(\bm B_n)$ is a $(k_n+m-1)d$-dimensional normal distribution with mean vector and dispersion matrix being $\mathrm{vec}\left((\bm X^T_n\bm X_n)^{-1}\bm X^T_n(\bm Y^T_{1,}\ldots\bm Y^T_{n,})^T\bm\Sigma^{-1}\left(\bm\Sigma^{-1}+\frac{k_n\bm I_d}{n}\right)^{-1/2}\right)$ and $\bm I_d\otimes(\bm X^T_n\bm X_n)^{-1}$ respectively, since by the properties of Kronecker product, for the matrices $\bm A$, $\bm B$ and $\bm D$ of appropriate orders
\begin{eqnarray}
(\bm B^T\otimes\bm A)\mathrm{vec}(\bm D)=\mathrm{vec}(\bm{ADB}).\nonumber
\end{eqnarray}
Let us consider the mean vector of the posterior distribution of the vector $\left(\left(\bm\Sigma^{-1}+\frac{k_n\bm I_d}{n}\right)^{1/2}\otimes\bm I_{k_n+m-1}\right)\mathrm{vec}(\bm B_n)$. We observe that
\begin{eqnarray}
\lefteqn{(\bm X^T_n\bm X_n)^{-1}\bm X^T_n(\bm Y^T_{1,}\ldots\bm Y^T_{n,})^T\bm\Sigma^{-1}\left(\bm\Sigma^{-1}+\frac{k_n\bm I_d}{n}\right)^{-1/2}}\nonumber\\
&=&(\bm X^T_n\bm X_n)^{-1}\bm X^T_n(\bm Y_{,1}\ldots\bm Y_{,d})\bm\Sigma^{-1}\left(\bm\Sigma^{-1}+\frac{k_n\bm I_d}{n}\right)^{-1/2}\nonumber\\
&=&(\bm X^T_n\bm X_n)^{-1}\bm X^T_n(\bm Y_{,1}\ldots\bm Y_{,d})\left(\bm \Sigma+\frac{k_n\bm\Sigma^2}{n}\right)^{-1/2}\nonumber\\
&=&(\bm X^T_n\bm X_n)^{-1}\bm X^T_n\left(\sum_{j=1}^d\bm Y_{,j}c_{j1}\ldots\sum_{j=1}^d\bm Y_{,j}c_{jd}\right)^{n\times d},\nonumber
\end{eqnarray}
where $\bm C^{d\times d}=(\!(c_{jk})\!)=\left(\bm \Sigma+\frac{k_n\bm\Sigma^2}{n}\right)^{-1/2}$. If a random vector $\bm W$ follows a multivariate normal distribution, then any sub-vector follows a multivariate normal distribution with mean vector and dispersion matrix being the corresponding sub-vector of $\bm{\E}(\bm W)$ and the corresponding sub-matrix of $\bm{\Var}(\bm W)$ respectively. Thus, for $k=1,\ldots,d$,
\begin{eqnarray}
\bm Z^{(k_n+m-1)\times 1}_k&:=&\left[\left(\left(\bm\Sigma^{-1}+\frac{k_n\bm I_d}{n}\right)^{1/2}\otimes\bm I_{k_n+m-1}\right)\mathrm{vec}(\bm B_n)\right]_{(k-1)(k_n+m-1)+1:k(k_n+m-1)}\nonumber\\\label{thm3_5}
\end{eqnarray}
and $\bm Z_k|\bm Y\sim N_{k_n+m-1}\left({(\bm X^T_n\bm X_n)}^{-1}\bm X^T_n\sum_{j=1}^d\bm Y_{,j}c_{jk},{(\bm X^T_n\bm X_n)}^{-1}\right)$. Also, the posterior distributions of $\bm Z_k$ and $\bm Z_{k'}$ are mutually independent for $k\neq k';k,k'=1,\ldots,d$.
Now we will prove that the total variation distance between the posterior distribution of $\bm Z_k$ and $N\left({(\bm X^T_n\bm X_n)}^{-1}\bm X^T_n\sum_{j=1}^d\bm Y_{,j}\sigma^{jk},{(\bm X^T_n\bm X_n)}^{-1}\right)$ converges in probability to zero for $k=1,\ldots,d$, where $\bm\Sigma^{-1/2}=(\!(\sigma^{jk})\!)$. We find that
\begin{eqnarray}
&\E_0&\left\|N\left({(\bm X^T_n\bm X_n)}^{-1}\bm X^T_n\sum_{j=1}^d\bm Y_{,j}c_{jk},{(\bm X^T_n\bm X_n)}^{-1}\right)\right.\nonumber\\
&&\left.-N\left({(\bm X^T_n\bm X_n)}^{-1}\bm X^T_n\sum_{j=1}^d\bm Y_{,j}\sigma^{jk},{(\bm X^T_n\bm X_n)}^{-1}\right)\right\|^2_{TV}\nonumber\\
&\lesssim&\E_0\left(\sum_{j=1}^d\|{(\bm X^T_n\bm X_n)}^{-1/2}\bm X^T_n\bm Y_{,j}(c_{jk}-\sigma^{jk})\|\right)^2\nonumber\\
&\lesssim&\sum_{j=1}^d\E_0\|{(\bm X^T_n\bm X_n)}^{-1/2}\bm X^T_n\bm Y_{,j}(c_{jk}-\sigma^{jk})\|^2.\label{thm3_6}
\end{eqnarray}
Fixing $k$, for $j=1,\ldots,d$, we have that
\begin{eqnarray}
\|{(\bm X^T_n\bm X_n)}^{-1/2}\bm X^T_n\bm Y_{,j}(c_{jk}-\sigma^{jk})\|^2&=&(c_{jk}-\sigma^{jk})^2\bm Y_{,j}^T\bm X_n{(\bm X^T_n\bm X_n)}^{-1}\bm X^T_n\bm Y_{,j}\nonumber\\
&\leq&(c_{jk}-\sigma^{jk})^2\bm Y_{,j}^T\bm Y_{,j},\nonumber
\end{eqnarray}
since the eigenvalues of $\bm X_n{(\bm X^T_n\bm X_n)}^{-1}\bm X^T_n$ are either zero or $1$. If $\lambda$ is an eigenvalue of $\bm\Sigma$, the corresponding eigenvalues of $\bm\Sigma^{-1/2}$ and $\bm C$ are $\lambda^{-1/2}$ and $\left(\lambda+\frac{k_n\lambda^2}{n}\right)^{-1/2}$ respectively. Let $\bm u$ be the eigenvector of $\bm\Sigma$ corresponding to $\lambda$, that is, $\bm\Sigma\bm u=\lambda\bm u$. Then,
\begin{eqnarray}
\left(\bm\Sigma^{-1/2}-\bm C\right)\bm u=\bm\Sigma^{-1/2}\bm u-\bm C\bm u=\lambda^{-1/2}\bm u-\left(\lambda+\frac{k_n\lambda^2}{n}\right)^{-1/2}\bm u.\nonumber
\end{eqnarray}
Thus, $\bm\Sigma^{-1/2}-\bm C$ is positive definite with the corresponding eigenvalue $\lambda^{-1/2}-\left(\lambda+\frac{k_n\lambda^2}{n}\right)^{-1/2}$ which is of the order $k_n/n$. Now using the Cauchy-Schwarz inequality, we get
\begin{eqnarray}
|c_{jk}-\sigma^{jk}|&=&|(\bm\Sigma^{-1/2}-\bm C)_{j,k}|\nonumber\\
&=&|\bm e^T_j(\bm\Sigma^{-1/2}-\bm C)\bm e_k|\nonumber\\
&\leq&|(\bm\Sigma^{-1/2}-\bm C)_{j,j}(\bm\Sigma^{-1/2}-\bm C)_{k,k}|^{1/2}\nonumber\\
&\leq&\lambda_{\max}(\bm\Sigma^{-1/2}-\bm C)\asymp\frac{k_n}{n},\nonumber
\end{eqnarray}
where $\bm e_j$ is a $d$-dimensional vector with $1$ at the $j^{th}$ position and zero elsewhere and $\lambda_{\max}(\bm A)$ stands for the maximum eigenvalue of the matrix $\bm A$. Hence for $j=1,\ldots,d$,
\begin{eqnarray}
\E_0\|{(\bm X^T_n\bm X_n)}^{-1}\bm X^T_n\bm Y_{,j}(c_{jk}-\sigma^{jk})\|^2&\leq&(c_{jk}-\sigma^{jk})^2\E_0(\bm Y_{,j}^T\bm Y_{,j})\nonumber\\
&\lesssim&\frac{k_n^2}{n^2}\times n=\frac{k_n^2}{n}=o(1).\label{thm3_8}
\end{eqnarray}
Combining \eqref{thm3_6} and \eqref{thm3_8}, we conclude that
\begin{eqnarray}
&&\left\|N\left({(\bm X^T_n\bm X_n)}^{-1}\bm X^T_n\sum_{j=1}^d\bm Y_{,j}c_{jk},{(\bm X^T_n\bm X_n)}^{-1}\right)
\right.\nonumber\\
&&\left.-N\left({(\bm X^T_n\bm X_n)}^{-1}\bm X^T_n\sum_{j=1}^d\bm Y_{,j}\sigma^{jk},{(\bm X^T_n\bm X_n)}^{-1}\right)\right\|_{TV}=o_{P_0}(1).\nonumber\\\label{thm3_9}
\end{eqnarray}
It should be noted that
\begin{eqnarray}
\lefteqn{\left(\bm G_{n,1}^T\ldots\bm G_{n,d}^T\right)\mathrm{vec}(\bm B_n)}\nonumber\\
&=&\left(\bm G_{n,1}^T\ldots\bm G_{n,d}^T\right)\left(\left(\bm\Sigma^{-1}+\frac{k_n\bm I_d}{n}\right)^{1/2}\otimes\bm I_{k_n+m-1}\right)^{-1}\nonumber\\
&&\times\left(\left(\bm\Sigma^{-1}+\frac{k_n\bm I_d}{n}\right)^{1/2}\otimes\bm I_{k_n+m-1}\right)\mathrm{vec}(\bm B_n)\nonumber\\
&=&\left(\bm G_{n,1}^T\ldots\bm G_{n,d}^T\right)\left(\left(\bm\Sigma^{-1}+\frac{k_n\bm I_d}{n}\right)^{1/2}\otimes\bm I_{k_n+m-1}\right)^{-1}\left({\begin{array}{c}\bm Z_1\\\vdots\\\bm Z_d\end{array}}\right)\nonumber\\
&=&\sum_{k=1}^d\left[\left(\bm G_{n,1}^T\ldots\bm G_{n,d}^T\right)\left(\left(\bm\Sigma^{-1}+\frac{k_n\bm I_d}{n}\right)^{1/2}\otimes\bm I_{k_n+m-1}\right)^{-1}\right]_{,{(k-1)(k_n+m-1)+1:k(k_n+m-1)}}\nonumber\\
&&\times\bm Z_k.\label{thm3_10}
\end{eqnarray}
Let us denote
\begin{eqnarray}
\bm\mu^{**}_n&=&\sqrt{n}\sum_{k=1}^d\left[\left(\bm G^T_{n,1}\ldots\bm G^T_{n,d}\right)\left(\left(\bm\Sigma^{-1}+\frac{k_n\bm I_d}{n}\right)^{1/2}\otimes\bm I_{k_n+m-1}\right)^{-1}\right]_{,{(k-1)(k_n+m-1)+1:k(k_n+m-1)}}\nonumber\\
&&\times{(\bm X^T_n\bm X_n)}^{-1}\bm X_n^T\sum_{j=1}^d\bm Y_j\sigma^{jk}
-\bm J_{\bm\theta_0}^{-1}\sqrt{n}\bm\Gamma(\bm f_0),\nonumber\\
\bm\Sigma^{**}_n&=&n\sum_{k=1}^d\left[\left(\bm G^T_{n,1}\ldots\bm G^T_{n,d}\right)\left(\left(\bm\Sigma^{-1}+\frac{k_n\bm I_d}{n}\right)^{1/2}\otimes\bm I_{k_n+m-1}\right)^{-1}\right]_{,{(k-1)(k_n+m-1)+1:k(k_n+m-1)}}\nonumber\\
&&\times{(\bm X^T_n\bm X_n)}^{-1}\nonumber\\
&&\times\left[\left(\bm G^T_{n,1}\ldots\bm G^T_{n,d}\right)\left(\left(\bm\Sigma^{-1}+\frac{k_n\bm I_d}{n}\right)^{1/2}\otimes\bm I_{k_n+m-1}\right)^{-1}\right]^T_{{(k-1)(k_n+m-1)+1:k(k_n+m-1)},}.\nonumber
\end{eqnarray}
Since the posterior distributions of $\bm Z_k$, $k=1,\ldots,d$ are independent, \eqref{thm3_9} and \eqref{thm3_10} give
\begin{eqnarray}
\,\,\,\,\,\,\,\,\,\,\,\,\,\,\,\,\,\,\,\,\,\,\,\,\,\,\,\left\|\left(\sqrt{n}\left(\bm G_{n,1}^T\ldots\bm G_{n,d}^T\right)\mathrm{vec}(\bm B_n)-\sqrt{n}\bm J^{-1}_{\bm\theta_0}(\bm f_0)\right)-N(\bm\mu_n^{**},\bm\Sigma_n^{**})\right\|_{TV}=o_{P_0}(1).\nonumber
\end{eqnarray}
Following the steps of the proof of Lemma 4, it can be shown that the eigenvalues of the matrix $\bm\Sigma^*_n$ mentioned in the statement of Theorem 3 are of the order $n^{-1}$. We can show that the Kullback-Leibler divergence of $N(\bm\mu^{**}_n,\bm\Sigma^{**}_n)$ from $N(\bm\mu^{*}_n,\bm\Sigma^{*}_n)$ converges in probability to zero by going through some routine matrix manipulations. Hence,
\begin{eqnarray}
\,\,\,\,\,\,\,\,\,\,\,\,\,\,\,\,\,\,\,\,\,\,\,\,\,\,\,\left\|\left(\sqrt{n}\left(\bm G_{n,1}^T\ldots\bm G_{n,d}^T\right)\mathrm{vec}(\bm B_n)-\sqrt{n}\bm J^{-1}_{\bm\theta_0}(\bm f_0)\right)-N(\bm\mu_n^{*},\bm\Sigma_n^{*})\right\|_{TV}=o_{P_0}(1).\nonumber
\end{eqnarray}
The above expression is equivalent to \eqref{thm2_1} of the proof of Theorem 2. The rest of the proof follows in the same way as the last part of the proof of Theorem 2.
\end{proof}

\bibliographystyle{chicago}
\bibliography{ref}

\end{document}